\newcommand{\referenza}{}
\newtheorem{prop}{Proposition}[section]
\newtheorem{thm}[prop]{Theorem}
\newtheorem*{thm*}{Theorem \referenza}
\newtheorem{lemma}[prop]{Lemma}
\theoremstyle{definition}
\newtheorem{defi}[prop]{Definition}
\newtheorem{rem}[prop]{Remark}
\newcommand{\Z}{\mathbb{Z}}
\newcommand{\R}{\mathbb{R}}
\newcommand{\C}{\mathbb{C}}
\newcommand{\st}{\;:\;}
\newcommand{\sspace}{\text{\--}}
\newcommand{\ssspace}{\text{\textdblhyphen}}
\newcommand{\scalar}[2]{\left\langle #1 \,\middle|\, #2 \right\rangle}
\DeclareMathOperator{\im}{i}
\DeclareMathOperator{\imm}{im}
\DeclareMathOperator{\de}{d}
\DeclareMathOperator{\id}{id}
\DeclareMathOperator{\End}{End}
\DeclareMathOperator{\Tot}{Tot}
\DeclareMathOperator{\Cliff}{Cliff}
\newcommand{\del}{\partial}
\newcommand{\delbar}{\overline{\del}}
\title{On cohomological decomposition of generalized-complex structures}
\author{Daniele Angella}
\address[Daniele Angella]{Istituto Nazionale di Alta Matematica}
\curraddr{Dipartimento di Matematica e Informatica\\
Universit\`{a} degli Studi di Parma\\
Parco Area delle Scienze 53/A, 43124\\
Parma, Italy}
\email{daniele.angella@math.unipr.it}
\author{Simone Calamai}
\address[Simone Calamai]{Dipartimento di Matematica e Informatica ``Ulisse Dini''\\
Università di Firenze\\
via Morgagni 67/A, 50134\\
Firenze, Italy}
\email{scalamai@math.unifi.it}
\author{Adela Latorre}
\address[Adela Latorre]{
Departamento de Matem\'aticas -- IUMA\\
Universidad de Zaragoza\\
Campus Plaza San Francisco, 50009\\
Zaragoza, Spain}
\email{adela@unizar.es}
\keywords{almost-complex, generalized-complex, cohomology, Iwasawa manifold, $\mathcal{C}^\infty$-pure-and-full, Brylinski conjecture.}
\thanks{The first author is granted with a research fellowship by Istituto Nazionale di Alta Matematica INdAM and is supported by the Project PRIN ``Varietà reali e complesse: geometria, topologia e analisi armonica'', by the Project FIRB ``Geometria Differenziale e Teoria Geometrica delle Funzioni'', and by GNSAGA of INdAM. The second author is supported by GNSAGA of INdAM and by the Project PRIN ``Varietà reali e complesse: geometria, topologia e analisi armonica''. The third author is supported by Project MICINN (Spain) MTM2011-28326-C02-01 and by a DGA predoctoral scholarship.}
\subjclass[2010]{32Q60, 53D18, 57T15}
\begin{document}

\begin{abstract}
 We study properties concerning decomposition in cohomology by means of generalized-complex structures. This notion includes the $\mathcal{C}^\infty$-pure-and-fullness introduced by Li and Zhang in the complex case and the Hard Lefschetz Condition in the symplectic case. Explicit examples on the moduli space of the Iwasawa manifold are investigated.
\end{abstract}

\maketitle

\section*{Introduction}

The decomposition of harmonic forms on compact K\"ahler manifolds into bi-graded components is a strong result in Hodge theory. Therefore, one would also like to extend results on cohomological decompositions to weaker structures, maybe relying on their complex, symplectic, or Riemannian aspects. We recall, for example, the theory initiated by J.-L. Brylinski on Hodge theory for symplectic manifolds \cite{brylinski}. The analogies and differences between the results obtained in the complex and symplectic cases acquire a deeper meaning when they are framed into the generalized-complex setting. Tools from generalized-complex geometry have been recently used to develop a Hodge theory for SKT structures in \cite{cavalcanti-skt}. The aim of this note is to provide a notion of cohomological-decomposability on generalized-complex manifolds, showing its coherence with already-known notions for complex and symplectic structures. We consider that having a comparison frame between these two parallel cases may inspire further results, in either complex or symplectic geometry.

\medskip

Let $(X,\, J)$ be a compact complex manifold. There is a natural bi-graded subgroup of the de Rham cohomology of $X$, given by the image of the map
$$ H^{\bullet,\bullet}_{BC}(X) \;:=\; \frac{\ker\del\cap\ker\delbar}{\imm\del\delbar} \longrightarrow H^\bullet_{dR}(X;\C) \;. $$
Observe that the surjectivity of this map would yield to a cohomological decomposition of de Rham cohomology related to the complex structure.
In fact, the stronger property of the above map being an isomorphism is called the $\partial\overline\partial$-Lemma property.
Furthermore, it should be noted that these subgroups also make sense in a more general framework: that of almost-complex structures $J$ on $X$. Indeed, it suffices to set
$$ H^{(p,q)}_{J}(X) \;:=\; \left\{ \left[\alpha\right] \in H^{p+q}_{dR}(X;\C) \st \alpha\in\wedge^{p,q}_JX \right\} \;\subseteq\; H^{p+q}_{dR}(X;\C) \;. $$
In the integrable case, $H^{(p,q)}_{J}(X) = \imm\left(H^{p,q}_{BC}(X)\to H^{p+q}_{dR}(X;\C) \right)$.
Such subgroups have been studied by T.-J. Li and W. Zhang in \cite{li-zhang} when investigating symplectic cones on almost-complex manifolds. It is important to mention that, in general, these subgroups may not yield to a decomposition of the de Rham cohomology. In this sense, the result by T. Dr\v{a}ghici, T.-J. Li, and W. Zhang in \cite[Theorem 2.3]{draghici-li-zhang} appears as a very specific property of compact $4$-dimensional manifolds: it states that any almost-complex structure $J$ on a compact $4$-dimensional manifold $X^4$ satisfies
\begin{eqnarray*}
 H^2_{dR}(X^4;\R) &=& \left\{ \left[\alpha\right]\in H^2_{dR}(X^4;\R) \st J\alpha=\alpha \right\} \\[5pt]
 &\oplus & \left\{ \left[\alpha\right]\in H^2_{dR}(X^4;\R) \st J\alpha=-\alpha \right\} \;.
\end{eqnarray*}
Another interesting example is the Iwasawa manifold $\mathbb{I}_3$, which is one of the simplest non-K\"ahler example of complex threefold \cite{fernandez-gray}. With respect to its natural holomorphically-parallelizable complex structure, the map $H^{\bullet,\bullet}_{BC}(\mathbb{I}_3) \to H^\bullet_{dR}(\mathbb{I}_3;\C)$ is surjective (see \cite[Theorem 3.1]{angella-tomassini-1} and \S\ref{sec:iwasawa}). The same holds when one endows the underlying differentiable manifold of $\mathbb{I}_3$ with the Abelian complex structure given in \S\ref{sec:iwasawa} (see \cite{latorre-ugarte}). Such examples show that this kind of decomposition is a strictly weaker property than the $\partial\overline\partial$-Lemma.

\medskip

Consider now a compact symplectic manifold $(X,\, \omega)$. A symplectic Hodge theory was proposed by J.-L. Brylinski in \cite{brylinski} and further results in this direction were obtained, among others, by O. Mathieu \cite{mathieu}, D. Yan \cite{yan}, V. Guillemin \cite{guillemin}, and G.~R. Cavalcanti \cite{cavalcanti-phd}. Recently, L.-S. Tseng and S.-T. Yau introduced and studied some symplectic cohomologies \cite{tseng-yau-1, tseng-yau-2, tseng-yau-3, tsai-tseng-yau}. The group
$$ SH^\bullet_{BC}(X) \;:=\; \frac{\ker\de\cap\ker\de^\Lambda}{\imm\de\de^\Lambda} \;, $$
also denoted by $H^\bullet_{\de+\de^\Lambda}$ in \cite{tseng-yau-1}, plays the same role as the Bott-Chern cohomology for complex manifolds (here, $\de^\Lambda:=\left[\de,-\iota_{\omega^{-1}}\right]$). As shown in Proposition \ref{prop:Cpf-sympl}, the surjectivity of the natural map
$$ SH^\bullet_{BC}(X) \longrightarrow H^\bullet_{dR}(X;\R) $$
induced by the identity turns out to be equivalent to the property that every de Rham cohomology class admits a $\de$-closed, $\de^\Lambda$-closed representative; that is, the Brylinski conjecture \cite[Conjecture 2.2.7]{brylinski} holds. This is also equivalent to the Hard Lefschetz Condition \cite[Corollary 2]{mathieu}, \cite[Theorem 0.1]{yan} and to the so-called $\de\de^\Lambda$-Lemma \cite[Proposition 1.4]{merkulov}, \cite{guillemin}, \cite[Theorem 5.4]{cavalcanti-phd}. (In fact, note that the spectral sequences associated to the bi-differential complex $\left( \wedge^\bullet X,\, \de,\, \de^\Lambda \right)$ degenerate at the first level \cite[Theorem 2.3.1]{brylinski}, \cite[Theorem 2.5]{fernandez-ibanez-deleon}, in contrast to the complex case.)

\medskip

Generalized-complex geometry was introduced by N. Hitchin \cite{hitchin} and studied, among others, by his students M. Gualtieri \cite{gualtieri-phd, gualtieri-annals} and G.~R. Cavalcanti \cite{cavalcanti-jgp} (see also \cite{cavalcanti-impa}). It provides a unified framework for both symplectic and complex structures. In fact, any generalized-complex structure is locally equivalent to the product of the standard complex structure on $\C^k$ and the standard symplectic structure on $\R^{2n-2k}$; see \cite[Theorem 4.35]{gualtieri-phd}, \cite[Theorem 3.6]{gualtieri-annals}.

\medskip

In this note, we study some results concerning decomposition in cohomology induced by generalized-complex structures $\mathcal{J}$ on a compact manifold $X$. More precisely, we consider the property that the natural map
$$ GH^\bullet_{BC}(X) \;:=\; \frac{\ker\del\cap\ker\delbar}{\imm\del\delbar} \longrightarrow GH_{dR}(X) $$
is surjective (here, $\del$ and $\delbar$ are the components of the exterior differential with respect to the graduation induced by $\mathcal{J}$ on the space of complex forms). In the special cases when $\mathcal{J}$ is induced by either a symplectic or a complex structure, we compare this property with the already-known notions (see Proposition \ref{prop:Cpf-sympl} and Proposition \ref{prop:Cpf-cplx}).

As an explicit example, we study structures on the real nilmanifold underlying the Iwasawa manifold. In particular, we focus on the holomorphically-parallelizable and the Abelian complex structures mentioned above, which provide a cohomological decomposition in complex sense (see \S\ref{sec:iwasawa}). In fact, they induce a cohomological decomposition in generalized-complex sense, too. As observed in \cite{ketsetzis-salamon}, they belong to two different components of the moduli space of left-invariant complex structures on the differentiable Iwasawa manifold. Nevertheless, G.~R. Cavalcanti and M. Gualtieri showed in \cite{cavalcanti-gualtieri} that such structures can be connected by a path of generalized-complex structures, which are given as $\beta$-transform and $B$-transform of a generalized-complex structure $\rho$. We study the cohomological decomposition property of such $\rho$, proving that the natural map from the generalized-Bott-Chern to the generalized-de Rham cohomology is surjective (see \S\ref{subsec:iwasawa-gencplx-path}). Furthermore, we provide another curve of generalized-complex structures connecting these two complex structures, but arising as $\beta$-transform and $B$-transform of a curve $\left\{J_t\right\}_{t\in[0,1]}$ of almost-complex structures. However, we show that these $J_t$s do not satisfy cohomological decomposition in the sense of Li and Zhang.

\bigskip

{\itshape Acknowledgments.}
The authors are greatly indebted to Xiuxiong Chen, Adriano Tomassini, and Luis Ugarte for their constant support and encouragement.
This work was in part initially conceived during the stay of the first author at Universidad de Zaragoza thanks to a grant by INdAM: the first author would like to thank the Departamento de Matem\'aticas for the warm hospitality.
Thanks are also due to Magda Rinaldi for useful discussions.

\section{Preliminaries and notation}
In this section, we recall the main definitions and results in generalized-complex geometry, in order to fix the notation. See, e.g., \cite{cavalcanti-impa} and the references therein for more details.

\subsection{Generalized-complex structures}
Let $X$ be a compact differentiable manifold of dimension $2n$. Consider the bundle $TX \oplus T^*X$ endowed with a natural symmetric pairing given by
\begin{eqnarray*}
 && \scalar{X+\xi}{Y+\eta}\;:=\; \frac{1}{2}\,\left(\xi(Y)+\eta(X)\right) \;.
\end{eqnarray*}

A {\em generalized-almost-complex structure} on $X$, \cite[Definition 4.14]{gualtieri-phd}, is a $\scalar{\sspace}{\ssspace}$-orthogonal endomorphism $\mathcal{J}\in \End(TX\oplus T^*X)$ such that $\mathcal{J}^2=-\id_{TX\oplus T^*X}$.

\medskip

Following \cite[\S3.2]{gualtieri-phd}, \cite[\S2]{gualtieri-annals}, consider the \emph{Courant bracket} on $TX\oplus T^*X$, 
\begin{eqnarray*}
&& \left[X+\xi,\, Y+\eta\right] \;:=\; \left[X,\, Y\right] + \mathcal{L}_X\eta - \mathcal{L}_Y\xi - \frac{1}{2}\, \de \left(\iota_X\eta-\iota_Y\xi\right) \;,
\end{eqnarray*}
and its associated Nijenhuis tensor for $\mathcal{J}\in\End\left(TX\oplus T^*X\right)$,
\begin{eqnarray*}
  \mathrm{Nij}_{\mathcal{J}} &:=& -\left[ \mathcal{J}\,\sspace ,\, \mathcal{J}\,\ssspace\right] + \mathcal{J} \left[ \mathcal{J}\,\sspace ,\, \ssspace \right] + \mathcal{J} \left[ \sspace ,\, \mathcal{J}\,\ssspace \right] + \mathcal{J} \left[ \sspace ,\, \ssspace \right] \;.
\end{eqnarray*}
(As a matter of notation, $\iota_{X}\in \End^{-1}\left(\wedge^\bullet X\right)$ denotes the interior product with $X\in \mathcal{C}^\infty(X;TX)$, and $\mathcal{L}_X:=\left[\iota_X,\, \de\right]\in \End^0\left(\wedge^\bullet X\right)$ denotes the Lie derivative along $X\in \mathcal{C}^\infty(X;TX)$.)

A {\em generalized-complex structure} on $X$ is a generalized-almost-complex structure $\mathcal{J}\in \End(TX\oplus T^*X)$ such that $\mathrm{Nij}_{\mathcal{J}}=0$, \cite[Definition 4.14, Definition 4.18]{gualtieri-phd}, \cite[Definition 3.1]{gualtieri-annals}.

\subsection{Graduation on forms}

Generalized-complex structures provide a graduation on the space of complex differential forms \cite[\S4.4]{gualtieri-phd}, \cite[Proposition 3.8]{gualtieri-annals}.

\medskip

Consider a $2n$-dimensional differentiable manifold $X$ endowed with a generalized-almost-complex structure $\mathcal{J}$.

Let $L$ be the $\im$-eigenspace of the $\C$-linear extension of $\mathcal{J}$ to $\left(TX \oplus T^*X\right)\otimes_\R \C$. Consider the complex rank $1$ sub-bundle $U$ of $\wedge^\bullet X \otimes_\R \C$ generated by a complex form $\rho$ whose Clifford annihilator is precisely $L=\left\{v\in \left(TX \oplus T^*X\right) \otimes_\R \C \st v\cdot \rho=0\right\}$. Here, the operation denotes the Clifford action of $TX\oplus T^*X$ on the space of differential forms on $X$ with respect to $\scalar{\sspace}{\ssspace}$, i.e.,
$$\begin{array}{rcl}
\Cliff\left(TX\oplus T^*X\right) \times \wedge^\bullet X  &\longrightarrow & \wedge^{\bullet-1} X \oplus \wedge^{\bullet+1} X\\[2pt]
\left(  (X+\xi), \, \varphi  \right) &\longmapsto & (X+\xi) \cdot \varphi \;:=\; \iota_X\varphi + \xi\wedge\varphi
\end{array}$$
as well as its bi-$\C$-linear extension.

For each $k\in\Z$, define
$$ U^k \;:=\; \wedge^{n-k}\bar L \cdot U \;\subseteq\; \wedge X \otimes_\R \C \;. $$

\medskip

By \cite[Theorem 4.3]{gualtieri-phd}, \cite[Theorem 3.14]{gualtieri-annals}, the condition $\mathrm{Nij}_{\mathcal{J}}=0$ is equivalent to the property
$$ \de U^\bullet \subset U^{\bullet+1} \oplus U^{\bullet-1} \;. $$
Therefore, one has \cite[\S4.4]{gualtieri-phd}, \cite[\S3]{gualtieri-annals}
$$ \de \;=\; \del + \delbar \;, $$
where
\begin{eqnarray*}
\del\lfloor_{U^\bullet} &:=& \pi_{U^{\bullet+1}}\circ \de\lfloor_{U^\bullet} \colon \colon U^\bullet \to U^{\bullet+1} \;, \\[5pt]
\delbar\lfloor_{U^\bullet} &:=& \pi_{U^{\bullet-1}}\circ \de\lfloor_{U^\bullet} \colon \colon U^\bullet \to U^{\bullet-1} \;.
\end{eqnarray*}

\smallskip

Now, let us recall the notion of $B$-field transform \cite[\S3.3]{gualtieri-phd} and see how it may affect the initial graduation of forms for a given generalized-complex structure $\mathcal{J}$ defined on $X$.

Consider a $\de$-closed $2$-form $B$, viewed as a map $TX\to T^*X$. Consider the generalized-complex structure given by
$$ \mathcal{J}^B \;:=\; \exp \left(-B\right) \, \mathcal{J} \, \exp B, \quad \text{ where } \quad \exp B \;=\;
\left(
\begin{array}{c|c}
 \id_{TX} & 0 \\
\hline
 B & \id_{T^*X}
\end{array}
\right) \;.$$
Then, the $\Z$-graduation is given by \cite[\S2.3]{cavalcanti-jgp}
$$ U^{\bullet}_{\mathcal{J}^B} \;=\; \exp B \wedge U^\bullet_{\mathcal{J}} $$
and in particular, \cite[\S2.3]{cavalcanti-jgp},
$$ \del_{\mathcal{J}^B} \;=\; \exp \left(-B\right) \, \del_{\mathcal{J}} \, \exp B \qquad \text{ and } \qquad \delbar_{\mathcal{J}^B} \;=\; \exp \left(-B\right) \, \delbar_{\mathcal{J}} \, \exp B \;. $$

\subsection{Complex and symplectic structures}

Complex and symplectic structures can be seen as very special examples of generalized-complex structures.

\medskip

Consider a $2n$-dimensional differentiable manifold $X$ endowed with a generalized-complex structure $\mathcal{J}$. We recall that, \cite[\S4.3]{gualtieri-phd}, \cite[Definition 3.5]{gualtieri-annals}, \cite[Definition 1.1]{gualtieri-annals}, the \emph{type} of $\mathcal{J}$ is given by the upper-semi-continuous function on $X$ defined by
$$ \mathrm{type}\left(\mathcal{J}\right) \;:=\; \frac{1}{2}\, \dim_\R \left( T^*X \cap \mathcal{J}T^*X \right) \;\in\; \left\{0, \ldots, n \right\} \;. $$
Points at which the type of the generalized-complex structure is locally constant are called \emph{regular points}.

A generalized Darboux theorem was proven by M. Gualtieri \cite[Theorem 4.35]{gualtieri-phd}, \cite[Theorem 3.6]{gualtieri-annals}. More precisely, for any regular point with type equal to $k$, there is an open neighbourhood endowed with a set of local coordinates such that the generalized-complex structure is a $B$-field transform of the standard generalized-complex structure of $\C^{k}\times\R^{2n-2k}$.

\subsubsection{Symplectic structures}
Symplectic structures can be interpreted as generalized-complex structures of type $0$ \cite[Example 4.10]{gualtieri-phd}.

\medskip

Let $X$ be a compact $2n$-dimensional manifold endowed with a symplectic structure $\omega \in \wedge^2 X$. The form $\omega\in\wedge^2X$ might be viewed as the isomorphism $\omega \colon TX \to T^*X$, which gives rise to the generalized-complex structure
$$ \mathcal{J}_\omega \;:=\;
\left(
\begin{array}{c|c}
 0 & -\omega^{-1} \\
\hline
 \omega & 0
\end{array}
\right) \;. $$

The $\Z$-graduation on forms is given by \cite[Theorem 2.2]{cavalcanti-jgp}
$$ U^{n-\bullet} \;=\; \exp{\left(\im\omega\right)}\, \left(\exp{\left(\frac{\Lambda}{2\im}\right)} \left(\wedge^\bullet X \otimes_\R \C\right)\right) \;, $$
where $\Lambda := -\iota_{\omega^{-1}}$.

By considering the isomorphism \cite[\S2.2]{cavalcanti-jgp}
$$ \varphi\colon \wedge X \otimes_\R \C \longrightarrow \wedge X \otimes_\R \C \;, \quad\text{where}\quad \varphi(\alpha) \;:=\; \exp{\left(\im\omega\right)}\, \left(\exp{\left(\frac{\Lambda}{2\im}\right)}\, \alpha\right) \;,$$
one has \cite[Corollary 1]{cavalcanti-jgp}
$$ \varphi\left(\wedge^\bullet X\otimes_\R\C\right) \simeq U^{n-\bullet} \;, $$
but also
$$ \varphi \, \de \;=\; \delbar \, \varphi  \quad\text{ and }\quad \varphi \, \de^{\Lambda} \;=\; -2\im\, \del\, \varphi \;, $$
where $\de^\Lambda := \left[\de,\,\Lambda\right]$, see \cite{koszul, brylinski}.

\subsubsection{Complex structures}
Complex structures can be interpreted as generalized-complex structures of type $n$ \cite[Example 4.11, Example 4.25]{gualtieri-phd}.

\medskip

Let $X$ be a compact $2n$-dimensional manifold endowed with a complex structure $J\in\End(TX)$. The complex structure gives rise to the generalized-complex structure
$$ \mathcal{J}_J
\;:=\;
\left(
\begin{array}{c|c}
 -J & 0 \\
\hline
 0 & J^*
\end{array}
\right)
\;\in\; \End\left(TX\oplus T^*X\right)
\;,$$
where $J^*\in\End(T^*X)$ denotes the dual endomorphism of $J\in\End(TX)$.

The $\Z$-graduation on forms is given by \cite[Example 4.25]{gualtieri-phd}
$$ U^\bullet_{\mathcal{J}_J} \;=\; \bigoplus_{p-q=\bullet}\wedge^{p,q}_JX \;. $$
Finally, note that $\del = \del_J$ and $\delbar = \delbar_J$ (see also \cite[Remark 4.26]{gualtieri-phd}).

\section{Generalized-complex subgroups of cohomologies}

Let $\mathcal{J}$ be a generalized-almost-complex structure on the manifold $X$.

Note that the differential $\de$ does not preserve the $\Z$-graduation $U^\bullet$. In fact, one can see that $GH_{dR}(X):=\frac{\ker\de}{\imm\de}$ is not $\Z$-graded. Hence, following \cite{li-zhang}, it is possible to force a $\Z$-graduation by studying the subgroups
$$ \left\{ GH_{\mathcal{J}}^{(k)}(X) \;:=\; \frac{\ker \de \cap\, U^k}{\imm \de} \right\}_{k\in\Z} \;. $$
They are denoted by $HH^k(X)$ and called {\em generalized cohomology} by G.~R. Cavalcanti in \cite[Definition at page 72]{cavalcanti-phd}.

In the integrable case, one can consider the natural map
$$ GH^\bullet_{BC}(X) \;:=\; \frac{\ker \del \cap \ker \delbar}{\imm \del\delbar}\, \longrightarrow\, GH_{dR}(X) $$
in such a way that, for any $k\in\Z$,
$$ GH^{(k)}_{\mathcal{J}}(X) \;=\; \imm \left( GH^k_{BC}(X) \to GH_{dR}(X) \right) \;. $$

\medskip

Note that
$$ \sum_{k\in\Z} GH_{\mathcal{J}}^{(k)}(X) \;\subseteq\; GH_{dR}(X) \;, $$
but in general, neither the sum is direct nor the inequality is an equality.

\begin{defi}
 Let $X$ be a compact manifold. A generalized-almost-complex structure $\mathcal{J}$ on $X$ is called
 \begin{itemize}
  \item {\em $\mathcal{C}^\infty$-pure} if
        $$ \bigoplus_{k\in\Z} GH_{\mathcal{J}}^{(k)}(X) \;\subseteq\; GH_{dR}(X) \;; $$
  \item {\em $\mathcal{C}^\infty$-full} if
        $$ \sum_{k\in\Z} GH_{\mathcal{J}}^{(k)}(X) \;=\; GH_{dR}(X) \;; $$
  \item {\em $\mathcal{C}^\infty$-pure-and-full} if it is both $\mathcal{C}^\infty$-pure and $\mathcal{C}^\infty$-full, that is,
        $$ \bigoplus_{k\in\Z} GH_{\mathcal{J}}^{(k)}(X) \;=\; GH_{dR}(X) \;. $$
 \end{itemize}
\end{defi}

\medskip

Analogously to \cite[Proposition 2.5]{li-zhang} and \cite[Theorem 2.1]{angella-tomassini-1}, we have the following proposition, assuring that $\mathcal{C}^\infty$-fullness is sufficient to have $\mathcal{C}^\infty$-pure-and-fullness (compare also with \cite[Proposition 4.1]{cavalcanti-phd}).

\begin{prop}\label{prop:Cf-Cpf}
 Let $X$ be a compact manifold endowed with a generalized-complex structure $\mathcal{J}$. If $\mathcal{J}$ is $\mathcal{C}^\infty$-full, then it is also $\mathcal{C}^\infty$-pure, and hence it is $\mathcal{C}^\infty$-pure-and-full.
\end{prop}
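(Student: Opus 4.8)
The plan is to show that $\mathcal{C}^\infty$-fullness forces the sum $\sum_{k\in\Z} GH^{(k)}_{\mathcal{J}}(X)$ to be direct, which is exactly $\mathcal{C}^\infty$-purity and hence yields $\mathcal{C}^\infty$-pure-and-fullness. The key tool will be a pairing between the graded pieces of complementary degree, induced by the natural duality on forms. Recall that the graduation $U^\bullet$ runs over $k\in\{-n,\ldots,n\}$, and there is a nondegenerate pairing coming from the Clifford/Mukai pairing $\langle\cdot,\cdot\rangle_{Mukai}$ on $\wedge^\bullet X\otimes_\R\C$, which pairs $U^k$ with $U^{-k}$ (as in Gualtieri's and Cavalcanti's treatment). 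Integrating this against the top form defines a bilinear pairing on de Rham cohomology that respects the graduation in the sense that $GH^{(k)}_{\mathcal{J}}(X)$ and $GH^{(\ell)}_{\mathcal{J}}(X)$ pair to zero unless $k+\ell=0$.

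First I would make this pairing precise: for $[\alpha]\in GH^{(k)}_{\mathcal{J}}(X)$ and $[\beta]\in GH^{(\ell)}_{\mathcal{J}}(X)$, represented by $\de$-closed forms $\alpha\in U^k$ and $\beta\in U^\ell$, set
$$
\langle [\alpha],[\beta]\rangle \;:=\; \int_X \langle \alpha,\beta\rangle_{Mukai} \;.
$$
One checks this is well defined on cohomology (using $\de$-closedness and Stokes, since the Mukai pairing is compatible with $\de$ up to an exact term) and that it vanishes whenever $k+\ell\neq 0$, because the Mukai pairing of $U^k$ with $U^\ell$ lands in the wrong total degree to contribute to the top-degree integral. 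Second, I would use $\mathcal{C}^\infty$-fullness: since $GH_{dR}(X)=\sum_k GH^{(k)}_{\mathcal{J}}(X)$ and Poincaré duality makes the total de Rham pairing nondegenerate, the restriction of the pairing to the complementary pieces $GH^{(k)}_{\mathcal{J}}(X)\times GH^{(-k)}_{\mathcal{J}}(X)$ must itself be nondegenerate. This is the crux: nondegeneracy of the global Poincaré pairing, together with the block-diagonal (in $k$ versus $-k$) structure just established, propagates down to each complementary pair.

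With nondegeneracy of the block pairing in hand, purity follows by a standard linear-algebra argument: suppose a class lies in $GH^{(k)}_{\mathcal{J}}(X)\cap\sum_{\ell\neq k}GH^{(\ell)}_{\mathcal{J}}(X)$; pairing it against $GH^{(-k)}_{\mathcal{J}}(X)$ shows it is orthogonal to everything it could pair nontrivially with, hence it must vanish. This gives directness of the sum, i.e. $\bigoplus_k GH^{(k)}_{\mathcal{J}}(X)\subseteq GH_{dR}(X)$, which combined with fullness gives equality.

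I expect the main obstacle to be the verification that the Mukai-type pairing descends to de Rham cohomology in the generalized-complex setting and that it genuinely pairs $U^k$ with $U^{-k}$ nondegenerately at the level of forms; the graded compatibility with $\de=\del+\delbar$ must be handled carefully, since only the total differential $\de$ (not $\del$ or $\delbar$ separately) is available on the $\de$-closed representatives defining $GH^{(k)}_{\mathcal{J}}(X)$. Once the pairing and its grading behaviour are established, the deduction of purity from fullness is purely formal and mirrors the arguments in \cite[Proposition 2.5]{li-zhang} and \cite[Theorem 2.1]{angella-tomassini-1}.
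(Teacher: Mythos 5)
Your proposal is correct and follows essentially the same route as the paper: both rest on the Mukai pairing, its property of pairing $U^k$ non-trivially only with $U^{-k}$, its descent to a non-degenerate pairing on $GH_{dR}(X)$, and the use of $\mathcal{C}^\infty$-fullness to conclude that a class lying in graded pieces of distinct degrees is orthogonal to everything and hence vanishes. The only cosmetic difference is that you first isolate non-degeneracy of the block pairing $GH^{(k)}_{\mathcal{J}}(X)\times GH^{(-k)}_{\mathcal{J}}(X)$ before deducing directness, whereas the paper argues directly on a class with two representatives of different degrees; the substance is identical.
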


\begin{proof}
 Suppose that there exists 
\begin{equation}\label{a-in-h-k}
\mathfrak{a} \;\in\; GH^{(h)}_{\mathcal{J}}(X) \cap GH^{(k)}_{\mathcal{J}}(X)
\end{equation}
 with $h\neq k$. Let $\alpha^{h}\in U^h$ and $\alpha^{k}\in U^k$ such that $\mathfrak{a}=\left[\alpha^{h}\right]=\left[\alpha^{k}\right]$. By hypothesis, we have
 $$ GH_{dR}(X) \;=\; \sum_{\ell\in\Z} GH^{(\ell)}_{\mathcal{J}}(X) \;. $$
 
 Consider the Mukai pairing:
 $$ \left( \sspace , \ssspace \right) \colon \wedge^\bullet X\otimes\C \times \wedge^\bullet X\otimes\C \longrightarrow \mathcal{C}^\infty(X;\C) \;, \qquad \left( \varphi_1, \varphi_2 \right) \;:=\; \left( \sigma(\varphi_1)\wedge\varphi_2 \right)_{\text{top}} \;, $$
 where $\sigma$ acts on decomposable forms as $\sigma(e_1\wedge\cdots\wedge e_\ell):=e_\ell\wedge\cdots\wedge e_1$ and $(\sspace)_{\text{top}}$ denotes the top-dimensional component.
 By \cite[Proposition 2.2]{cavalcanti-phd}, one has that the previous pairing vanishes in $U^h \times U^k$ unless $h+k=0$, in which case it is non-degenerate.
 Furthermore, it can be seen that it induces a non-degenerate pairing in cohomology,
 $$ \left( \sspace , \ssspace \right) \colon GH_{dR}(X) \times GH_{dR}(X) \longrightarrow \C \;. $$
 
 However, 
 $$ \left( \mathfrak{a} , GH_{dR}(X) \right) \;=\; \sum_{\ell\in\Z} \left( \mathfrak{a} , GH^{(\ell)}_{\mathcal{J}}(X) \right) \;=\; 0 $$
 due to \eqref{a-in-h-k}. Thus, we can conclude that $\mathfrak{a}=0$.
\end{proof}

As a consequence, we have the following interpretation of $\mathcal{C}^\infty$-pure-and-fullness.

\begin{prop}\label{prop:Cpf-BC-surj-dR}
 A generalized-complex structure $\mathcal{J}$ on a compact manifold $X$ is $\mathcal{C}^\infty$-pure-and-full if and only if the natural map
 $$ \bigoplus_{k\in\Z} GH^k_{BC}(X) \longrightarrow GH_{dR}(X) $$
 induced by the identity is surjective.
\end{prop}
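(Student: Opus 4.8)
The plan is to reduce the statement to Proposition~\ref{prop:Cf-Cpf} by identifying the image of the natural map with the sum $\sum_{k} GH^{(k)}_{\mathcal{J}}(X)$. First I would recall the fact already recorded in the excerpt, namely that for each $k\in\Z$ one has
$$ GH^{(k)}_{\mathcal{J}}(X) \;=\; \imm\left( GH^k_{BC}(X) \to GH_{dR}(X) \right) \;. $$
The map $\bigoplus_{k} GH^k_{BC}(X) \to GH_{dR}(X)$ sends a tuple $(\mathfrak{a}_k)_k$ to the sum of the images of its components; hence its image is exactly $\sum_{k} GH^{(k)}_{\mathcal{J}}(X)$. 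Consequently, the map is surjective if and only if $\sum_{k} GH^{(k)}_{\mathcal{J}}(X) = GH_{dR}(X)$, which is precisely the defining condition of $\mathcal{C}^\infty$-fullness.

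Next I would invoke Proposition~\ref{prop:Cf-Cpf}, which asserts that for a generalized-complex structure $\mathcal{C}^\infty$-fullness already implies $\mathcal{C}^\infty$-pure-and-fullness; the reverse implication is immediate, since $\mathcal{C}^\infty$-pure-and-fullness includes $\mathcal{C}^\infty$-fullness by definition. Chaining the two equivalences — surjectivity $\Leftrightarrow$ $\mathcal{C}^\infty$-fullness, and $\mathcal{C}^\infty$-fullness $\Leftrightarrow$ $\mathcal{C}^\infty$-pure-and-fullness — yields the desired statement.

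Since the argument is essentially a matter of unwinding the definitions, I do not anticipate any serious obstacle. The only point requiring care is the correct identification of the image of $\bigoplus_{k} GH^k_{BC}(X)$: one must check that summing over the (finitely many nonzero) degrees indeed recovers $\sum_{k} GH^{(k)}_{\mathcal{J}}(X)$ rather than some proper subspace, but this follows at once from the additivity of the natural map. The substantive content is therefore entirely carried by Proposition~\ref{prop:Cf-Cpf} — which itself relies on the non-degeneracy of the Mukai pairing in cohomology — and the present proposition is a clean reformulation of it in terms of the generalized Bott--Chern cohomology.
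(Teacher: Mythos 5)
Your proposal is correct and follows exactly the route the paper intends: the paper presents this proposition as an immediate consequence of the identity $GH^{(k)}_{\mathcal{J}}(X) = \imm\left( GH^k_{BC}(X) \to GH_{dR}(X) \right)$ together with Proposition~\ref{prop:Cf-Cpf}, which is precisely your chain of equivalences (surjectivity $\Leftrightarrow$ $\mathcal{C}^\infty$-fullness $\Leftrightarrow$ $\mathcal{C}^\infty$-pure-and-fullness). No gaps; your identification of the image of the direct sum with $\sum_{k} GH^{(k)}_{\mathcal{J}}(X)$ is the only step needing comment, and you handle it correctly.
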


\medskip

We recall that a generalized-complex manifold is said to {\em satisfy the $\del\delbar$-Lemma}, \cite[Definition at page 70]{cavalcanti-phd}, if
$$ \imm\del \cap \ker\delbar \;=\; \imm\del\delbar \;=\; \imm\delbar \cap \ker\del \;. $$

Note that compact generalized-complex manifolds satisfying the $\del\delbar$-Lemma provide examples of $\mathcal{C}^\infty$-pure-and-full structures. In fact, by \cite[Theorem 4.2]{cavalcanti-phd}, they are $\mathcal{C}^\infty$-full, and by Proposition \ref{prop:Cf-Cpf}, they are also $\mathcal{C}^\infty$-pure.

\begin{thm}[{see \cite[Theorem 4.2]{cavalcanti-phd}}]
 Let $X$ be a compact manifold endowed with a generalized-complex structure $\mathcal{J}$. If it satisfies the $\del\delbar$-Lemma, then $\mathcal{J}$ is $\mathcal{C}^\infty$-pure-and-full.
\end{thm}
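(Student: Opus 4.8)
The plan is to break the statement into two implications and to invoke the machinery already set up in the excerpt. First I would show that the $\del\delbar$-Lemma forces $\mathcal{J}$ to be $\mathcal{C}^\infty$-full, and then appeal to Proposition \ref{prop:Cf-Cpf} to upgrade $\mathcal{C}^\infty$-fullness to $\mathcal{C}^\infty$-pure-and-fullness. The second step is immediate, so essentially all the content lies in establishing fullness, i.e.\ in proving that every de Rham class has a representative which is a sum of $\de$-closed forms living in the homogeneous pieces $U^k$; equivalently, that the natural map $\bigoplus_{k} GH^k_{BC}(X)\to GH_{dR}(X)$ is surjective.

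To prove fullness I would argue by a peeling (zig-zag) induction on the top $U$-degree of a $\de$-closed representative. Take a $\de$-closed form $\alpha=\sum_k\alpha^k$ with $\alpha^k\in U^k$, and let $k_{+}$ be the largest index with $\alpha^{k_{+}}\neq 0$. Recall that $\del$ raises and $\delbar$ lowers the $U$-grading by one, and that $\de^2=0$ together with the splitting $\de=\del+\delbar$ forces $\del^2=\delbar^2=0$ and $\del\delbar+\delbar\del=0$. Projecting $\de\alpha=0$ onto its homogeneous components, the piece in $U^{k_{+}+1}$ reads $\del\alpha^{k_{+}}+\delbar\alpha^{k_{+}+2}=0$, and since $\alpha^{k_{+}+2}=0$ we conclude $\del\alpha^{k_{+}}=0$.

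The heart of the matter is the following splitting step. Because $\del\alpha^{k_{+}}=0$, the defect $\delbar\alpha^{k_{+}}\in U^{k_{+}-1}$ is automatically $\del$-closed, as $\del\delbar\alpha^{k_{+}}=-\delbar\del\alpha^{k_{+}}=0$, and it is visibly $\delbar$-exact; hence $\delbar\alpha^{k_{+}}\in\imm\delbar\cap\ker\del$. The $\del\delbar$-Lemma, in the form $\imm\delbar\cap\ker\del=\imm\del\delbar$, then produces $\zeta\in U^{k_{+}-1}$ with $\delbar\alpha^{k_{+}}=\del\delbar\zeta=-\delbar\del\zeta$. Setting $\beta^{k_{+}}:=\alpha^{k_{+}}+\del\zeta\in U^{k_{+}}$, one checks $\del\beta^{k_{+}}=\del\alpha^{k_{+}}+\del^2\zeta=0$ and $\delbar\beta^{k_{+}}=\delbar\alpha^{k_{+}}+\delbar\del\zeta=0$, so $\beta^{k_{+}}$ represents a class in the image of $GH^{k_{+}}_{BC}(X)$. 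Replacing $\alpha$ by the cohomologous $\de$-closed form $\alpha+\de\zeta$, whose top component is exactly $\beta^{k_{+}}$ and which differs from $\alpha$ only in strictly lower $U$-degrees, and then subtracting the $\de$-closed form $\beta^{k_{+}}$, leaves a $\de$-closed form of strictly smaller top $U$-degree. Since the $U$-grading is bounded below, iterating yields $[\alpha]_{dR}=\sum_k[\beta^k]$ with each $\beta^k\in U^k\cap\ker\del\cap\ker\delbar$, which is precisely $\mathcal{C}^\infty$-fullness; Proposition \ref{prop:Cf-Cpf} then closes the argument.

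The main obstacle, and the only place where the $\del\delbar$-Lemma is genuinely used, is this splitting step: one must know that the obstruction $\delbar\alpha^{k_{+}}$ to $\delbar$-closedness of the top component is not merely $\delbar$-exact but actually lies in $\imm\del\delbar$, so that it can be absorbed by a $\del$-correction without spoiling $\del$-closedness. The careful bookkeeping of $U$-degrees, ensuring that the corrector $\del\zeta$ lands in $U^{k_{+}}$, and the verification that the induction terminates, are the remaining routine points. An alternative and perhaps cleaner route would bypass the explicit zig-zag by quoting the standard homological consequence that the $\del\delbar$-Lemma turns both the inclusion of $\left(\ker\del\cap\ker\delbar,\,0\right)$ into $\left(\wedge^\bullet X\otimes\C,\,\de\right)$ and the projection onto $GH^\bullet_{BC}(X)$ into quasi-isomorphisms, so that the Bott-Chern-to-de Rham map is surjective; $\mathcal{C}^\infty$-pure-and-fullness would then follow at once from Proposition \ref{prop:Cpf-BC-surj-dR}.
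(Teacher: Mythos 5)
Your proposal is correct and takes essentially the same route as the paper: establish $\mathcal{C}^\infty$-fullness first and then invoke Proposition \ref{prop:Cf-Cpf} to upgrade it to $\mathcal{C}^\infty$-pure-and-fullness. The only difference is that the paper simply cites \cite[Theorem 4.2]{cavalcanti-phd} for the fullness step, whereas you supply an explicit (and correct) zig-zag induction on the top $U$-degree, with the degree bookkeeping and the use of $\imm\delbar\cap\ker\del=\imm\del\delbar$ all in order.
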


Finally, we prove that $B$-transforms preserve $\mathcal{C}^\infty$-pure-and-fullness.

\begin{prop}\label{B-trans}
 Let $B$ be a $\de$-closed $2$-form on a compact manifold $X$. The generalized-complex structure $\mathcal{J}$ on $X$ is $\mathcal{C}^\infty$-pure-and-full if and only if its $B$-transform $\mathcal{J}_B$ is.
\end{prop}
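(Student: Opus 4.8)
The plan is to reduce the statement to the behaviour of the map $\exp B\wedge(\sspace)$ on forms, and to track how it interacts with the relevant differentials and cohomologies. The key observation, already recorded in the excerpt, is that the $B$-transform acts on the graduation by $U^\bullet_{\mathcal{J}_B}=\exp B\wedge U^\bullet_{\mathcal{J}}$ and conjugates the differentials as $\del_{\mathcal{J}_B}=\exp(-B)\,\del_{\mathcal{J}}\,\exp B$ and $\delbar_{\mathcal{J}_B}=\exp(-B)\,\delbar_{\mathcal{J}}\,\exp B$. Since $B$ is $\de$-closed, wedging with $\exp B$ commutes with $\de$ in the sense that $\de(\exp B\wedge\varphi)=\exp B\wedge\de\varphi$; this is the single computation on which everything rests, and it follows from $\de(\exp B)=\de B\wedge\exp B=0$ together with the graded Leibniz rule.

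First I would set up the $\C$-linear isomorphism $\Phi:=\exp B\wedge(\sspace)$ on $\wedge^\bullet X\otimes_\R\C$, which is invertible with inverse $\exp(-B)\wedge(\sspace)$. From $\de\,\Phi=\Phi\,\de$ it follows immediately that $\Phi$ descends to an isomorphism on de Rham cohomology $GH_{dR}(X)\to GH_{dR}(X)$ that is moreover the identity, since $\exp B\wedge\varphi-\varphi=(B+\tfrac12 B\wedge B+\cdots)\wedge\varphi$ differs from $\varphi$ by the wedge of $\de$-closed forms — but more directly, $\Phi$ is chain-homotopic to the identity. Here I would be slightly careful: what I really need is that $\Phi$ carries $\de$-closed forms in $U^k_{\mathcal{J}}$ bijectively onto $\de$-closed forms in $U^k_{\mathcal{J}_B}$, and respects $\de$-exactness; this yields a canonical identification $GH^{(k)}_{\mathcal{J}}(X)\cong GH^{(k)}_{\mathcal{J}_B}(X)$ compatible with the inclusions into $GH_{dR}(X)$.

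Next I would pass to the Bott--Chern side. Because $\del_{\mathcal{J}_B}=\exp(-B)\,\del_{\mathcal{J}}\,\exp B$ and likewise for $\delbar$, the operator $\Phi$ intertwines the two bidifferential structures: it sends $\ker\del_{\mathcal{J}}\cap\ker\delbar_{\mathcal{J}}$ onto $\ker\del_{\mathcal{J}_B}\cap\ker\delbar_{\mathcal{J}_B}$ and $\imm\del_{\mathcal{J}}\delbar_{\mathcal{J}}$ onto $\imm\del_{\mathcal{J}_B}\delbar_{\mathcal{J}_B}$, hence induces an isomorphism $GH^k_{BC}(X)_{\mathcal{J}}\xrightarrow{\sim}GH^k_{BC}(X)_{\mathcal{J}_B}$. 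The crucial point is that all four identifications — on $U^k$, on $GH^{(k)}$, on $GH_{dR}$, and on $GH^k_{BC}$ — are implemented by the \emph{same} map $\Phi$, so they fit into a commutative square relating the natural map $\bigoplus_k GH^k_{BC}(X)\to GH_{dR}(X)$ for $\mathcal{J}$ to the corresponding map for $\mathcal{J}_B$. Surjectivity of one is then equivalent to surjectivity of the other, and by Proposition \ref{prop:Cpf-BC-surj-dR} this is exactly the assertion that $\mathcal{J}$ is $\mathcal{C}^\infty$-pure-and-full if and only if $\mathcal{J}_B$ is.

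I expect the main obstacle to be the bookkeeping of the vertical map on $GH_{dR}(X)$: one must verify that $\Phi$ descends and that the resulting square genuinely commutes, rather than merely asserting that both horizontal maps are surjective. The cleanest route is to observe that $\Phi$ is the identity on de Rham cohomology (so the right vertical arrow is an isomorphism, indeed the identity after this identification), while the left vertical arrow is the isomorphism $GH^k_{BC}(X)_{\mathcal{J}}\cong GH^k_{BC}(X)_{\mathcal{J}_B}$ induced by conjugation; commutativity then reduces to the identity $\de\,\Phi=\Phi\,\de$ at the level of representatives. Once this diagram is in place the conclusion is formal, so the entire weight of the argument lies in the $\de$-closedness of $B$ and its consequence $\de\,\Phi=\Phi\,\de$.
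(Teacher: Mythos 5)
Your argument follows essentially the same route as the paper's: the paper simply carries out the transfer at the level of representatives, taking a $\de$-closed form $\alpha$, decomposing $\exp(-B)\wedge\alpha$ using $\mathcal{C}^\infty$-fullness of $\mathcal{J}$, and wedging back with $\exp B$ to exhibit $\mathcal{C}^\infty$-fullness of $\mathcal{J}_B$ (pureness then being automatic by Proposition \ref{prop:Cf-Cpf}); your commutative square packages the same computation, with the same single input $\de\circ(\exp B\wedge\cdot)=(\exp B\wedge\cdot)\circ\de$. One assertion in your writeup is false, though: $\Phi=\exp B\wedge(\cdot)$ is \emph{not} the identity on de Rham cohomology, nor chain-homotopic to it. It acts on $GH_{dR}(X)$ as cup product with $\exp\left[B\right]$, which is the identity only when $\left[B\right]=0$; for a non-exact closed $B$ one already has $\Phi\left[1\right]=\left[1\right]+\left[B\right]+\cdots\neq\left[1\right]$. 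Fortunately your proof never needs this claim: all that is required is that the right-hand vertical arrow of your square be an \emph{isomorphism} of $GH_{dR}(X)$, which it is, with inverse induced by $\exp(-B)\wedge(\cdot)$. With that correction the diagram argument is sound and equivalent to the paper's proof.
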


\begin{proof}
 Suppose that $\mathcal{J}$ is $\mathcal{C}^\infty$-pure-and-full. Let $\alpha \in GH_{dR}(X, \mathcal{J})$ and consider the $\de$-closed form $\exp(-B)\, \alpha$. By hypothesis, there exists a form $\gamma$ such that
 
 $$ \exp(-B)\, \alpha \;=\; \sum_{k\in\Z} \alpha^{k}_{\mathcal{J}} + \de\gamma, \quad \text{ where } \quad \alpha^{k}_{\mathcal{J}} \;\in\; U^k_{\mathcal{J}} \cap \ker \de \;. $$
 Therefore,
 $$ \alpha \;=\; \sum_{k\in\Z} \exp(B)\, \alpha^{k}_{\mathcal{J}} + \de \left(\exp(B)\,\gamma\right) \;. $$
 Since $\exp(B)\, \alpha^{k}_{\mathcal{J}} \in U^{k}_{\mathcal{J}_B} \cap \ker\de$ (see \cite[\S2]{cavalcanti-computations}), we can conclude that also $\mathcal{J}_B$ is $\mathcal{C}^\infty$-pure-and-full.

\smallskip
 The converse follows noting that $\mathcal{J}$ is the $(-B)$-transform of $\mathcal{J}_B$.
\end{proof}

As generalized-complex-geometry provides a common framework for both complex and symplectic geometry, one would expect to recover existing concepts of $\mathcal{C}^\infty$-pure-and-fullness for these two special cases. We devote to this aim the following lines.

\subsection{Symplectic subgroups of cohomologies}
Let $X$ be a compact manifold endowed with a symplectic structure $\omega$.
Denote $L\colon \wedge^\bullet X \ni \alpha \mapsto \alpha\wedge\omega \in \wedge^{\bullet+2}X$ and $\Lambda:=-\iota_{\omega^{-1}}$. Set also $P^\bullet := \ker \Lambda$.

A counterpart of the Bott-Chern cohomology in the symplectic case was introduced and studied by S.-T. Yau and L.-S. Tseng \cite{tseng-yau-1, tseng-yau-2, tseng-yau-3, tsai-tseng-yau}:
$$ SH^{\bullet}_{BC}(X) \;:=\; \frac{\ker\de\cap\ker\de^\Lambda}{\imm\de\de^\Lambda} \;, $$
where $\de^\Lambda:=\left[\de,\Lambda\right]$.

Inspired by Proposition \ref{prop:Cpf-BC-surj-dR}, we will say that, for every $k\in\Z$, the symplectic structure $\omega$ is {\em $\mathcal{C}^\infty$-pure-and-full at the $k$th stage in the sense of Brylinski} \cite{brylinski} if the natural map
$$ SH^{k}_{BC}(X) \longrightarrow H_{dR}^{k}(X;\R) $$
induced by the identity is surjective. When this holds at every stage, it means that every class in the de Rham cohomology admits a representative being both $\de$-closed and $\de^\Lambda$-closed: this property is known as {\em satisfying the Brylinski conjecture} \cite[Conjecture 2.2.7]{brylinski}.
By \cite[Corollary 2]{mathieu}, \cite[Theorem 0.1]{yan}, \cite[Proposition 1.4]{merkulov}, \cite{guillemin}, \cite[Theorem 5.4]{cavalcanti-phd}, it turns out that the following conditions are equivalent:
\begin{itemize}
 \item being $\mathcal{C}^\infty$-pure-and-full at every stage in the sense of Brylinski;
 \item satisfying the Brylinski conjecture;
 \item satisfying the Hard Lefschetz Condition;
 \item satisfying the $\de\de^\Lambda$-Lemma.
\end{itemize}
(Recall that a symplectic structure $\omega$ on a compact $2n$-dimensional manifold $X$ is said to satisfy the {\em Hard Lefschetz Condition} if, for any $k\in\Z$, the map $L^k \colon H^{n-k}_{dR}(X;\R) \to H^{n+k}_{dR}(X;\R)$ is bijective. Recall also that it is said to satisfy the {\em $\de\de^\Lambda$-Lemma} if every $\de^\Lambda$-closed, $\de$-exact form is $\de\de^\Lambda$-exact too; that is, if the natural map $SH^{\bullet}_{BC}(X) \to H^{\bullet}_{dR}(X;\R)$ is injective).

\medskip
Let us now show the explicit decomposition of $H^{\bullet}_{dR}(X;\R)$ in this case.

Consider the Lefschetz decomposition on the space of forms, $\wedge^\bullet X = \bigoplus_{2r+s=\bullet} L^rP^s$. Note that $\mathcal{C}^\infty$-pure-and-fullness at every stage in the sense of Brylinski means that the Lefschetz decomposition moves to cohomology. More precisely, one can define the following subgroups \cite{angella-tomassini-4}
$$ H^{(r,s)}_{\omega}(X) \;:=\; \left\{ \left[\alpha\right] \in H^\bullet_{dR}(X;\C) \st \alpha\in L^r P^s \right\} $$
and consider \cite{tseng-yau-1}
\begin{eqnarray*}
SH^{(r,s)}_{\omega}(X) &:=& L^r H^{(0,s)}_{\omega}(X) \\[5pt]
&=& \imm \left( L^r PH^s_{BC}(X) \to H^{2r+s}_{dR}(X;\R) \right) \\[5pt]
&=& \left\{ L^r\left[\beta^{(s)}\right] \in H^{2r+s}_{dR}(X;\R) \st \beta^{(s)} \in P^s \right\} \;\subseteq\; H^{2r+s}_{dR}(X;\R) \;,
\end{eqnarray*}
where
$$ PH^{\bullet}_{BC}(X) \;:=\; \frac{\ker\de\cap\ker\de^\Lambda\cap\, P^\bullet}{\imm\de\de^\Lambda} \;. $$
By \cite[Remark 2.3]{angella-tomassini-4}, it can be seen that $\omega$ is $\mathcal{C}^\infty$-pure-and-full in the sense of Brylinski if and only if
$$ H^{\bullet}_{dR}(X;\R) \;=\; \bigoplus_{2r+s=\bullet} SH^{(r,s)}_{\omega}(X) \;. $$

\medskip

Let us now compare the notions of $\mathcal{C}^\infty$-pure-and-fullness for a symplectic structure in the sense of Brylinski and for its induced generalized-complex structure.

\begin{prop}\label{prop:Cpf-sympl}
 Let $X$ be a compact manifold. Consider a symplectic structure $\omega$ on $X$, viewed as a generalized-almost-complex structure $\mathcal{J}$.
 Then $\mathcal{J}$ is $\mathcal{C}^\infty$-pure-and-full if and only if $\omega$ is $\mathcal{C}^\infty$-pure-and-full at every stage in the sense of Brylinski.
\end{prop}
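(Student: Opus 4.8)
The plan is to match the two properties through the bidegree-reversing isomorphism $\varphi$ recalled above, reducing each of them to a symmetric ``$\del\delbar$-type'' condition. Recall that $\varphi$ carries $\wedge^\bullet X\otimes_\R\C$ onto $U^{n-\bullet}$ and satisfies $\varphi\,\de=\delbar\,\varphi$ and $\varphi\,\de^\Lambda=-2\im\,\del\,\varphi$; hence $\varphi^{-1}\delbar\,\varphi=\de$ and $\varphi^{-1}\del\,\varphi=-\frac{1}{2\im}\de^\Lambda$, so $\varphi$ is an isomorphism of bi-differential complexes $\left(\wedge^\bullet X\otimes_\R\C,\,\de,\,\de^\Lambda\right)\to\left(U^\bullet,\,\delbar,\,\del\right)$ sending form-degree $k$ to $U^{n-k}$. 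Applying $\varphi^{-1}$ to kernels and images and using $\de\de^\Lambda=-\de^\Lambda\de$, one checks at once that $\varphi$ identifies $SH^{k}_{BC}(X)$ with $GH^{n-k}_{BC}(X)$ and transforms the generalized $\del\delbar$-Lemma for $\mathcal{J}_\omega$, namely $\imm\del\cap\ker\delbar=\imm\del\delbar=\imm\delbar\cap\ker\del$, into the symplectic $\de\de^\Lambda$-Lemma $\imm\de\cap\ker\de^\Lambda=\imm\de\de^\Lambda=\imm\de^\Lambda\cap\ker\de$. Thus $\mathcal{J}_\omega$ satisfies the $\del\delbar$-Lemma if and only if $\omega$ satisfies the $\de\de^\Lambda$-Lemma.

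Granting this dictionary, the ``if'' implication is immediate. If $\omega$ is $\mathcal{C}^\infty$-pure-and-full at every stage in the sense of Brylinski, then by the cited equivalences (\cite[Corollary 2]{mathieu}, \cite[Theorem 0.1]{yan}, \cite[Proposition 1.4]{merkulov}, \cite{guillemin}, \cite[Theorem 5.4]{cavalcanti-phd}) it satisfies the $\de\de^\Lambda$-Lemma; by the previous paragraph $\mathcal{J}_\omega$ then satisfies the $\del\delbar$-Lemma, and by the result quoted above (\cite[Theorem 4.2]{cavalcanti-phd}) it is $\mathcal{C}^\infty$-pure-and-full. For the converse I would run the same chain backwards, so that everything reduces to the single implication ``$\mathcal{J}_\omega$ is $\mathcal{C}^\infty$-pure-and-full $\Rightarrow$ $\mathcal{J}_\omega$ satisfies the $\del\delbar$-Lemma'': transporting the resulting $\del\delbar$-Lemma through $\varphi$ and applying the cited symplectic equivalences then yields Brylinski $\mathcal{C}^\infty$-pure-and-fullness.

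This last implication is the main obstacle, and it is exactly where the symplectic case departs from the complex one. By Proposition \ref{prop:Cpf-BC-surj-dR}, $\mathcal{C}^\infty$-pure-and-fullness provides only surjectivity of $\bigoplus_{k}GH^{k}_{BC}(X)\to GH_{dR}(X)$, while the $\del\delbar$-Lemma requires in addition the injectivity of the Bott-Chern-to-de Rham map; in the complex case this upgrade genuinely fails, as the Iwasawa examples of the Introduction exhibit $\mathcal{C}^\infty$-pure-and-full structures that do not satisfy the $\del\delbar$-Lemma. The extra ingredient available for $\mathcal{J}_\omega$ is the degeneration at the first page of the spectral sequences of $\left(\wedge^\bullet X,\,\de,\,\de^\Lambda\right)$ --- equivalently, via $\varphi$, of $\left(U^\bullet,\,\del,\,\delbar\right)$ --- proved in \cite[Theorem 2.3.1]{brylinski} and \cite[Theorem 2.5]{fernandez-ibanez-deleon}. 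This degeneration pins down the total dimensions of the $\del$- and $\delbar$-cohomologies to equal $\dim GH_{dR}(X)$; inserting the surjective decomposition of each de Rham class supplied by $\mathcal{C}^\infty$-pure-and-fullness into the resulting strict filtrations, a Frölicher-type dimension count forces the Bott-Chern-to-de Rham map to be injective as well, hence bijective. This is the $\del\delbar$-Lemma, which completes the chain of equivalences.
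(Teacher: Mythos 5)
Your opening paragraph --- transporting everything through $\varphi$, with $\varphi\,\de\,\varphi^{-1}=\delbar$ and $\varphi\,\de^\Lambda\,\varphi^{-1}=-2\im\,\del$, so that $SH^{k}_{BC}(X)$ is identified with $GH^{n-k}_{BC}(X)$ --- is exactly the paper's argument, and the paper \emph{stops there}: by Proposition \ref{prop:Cpf-BC-surj-dR}, $\mathcal{C}^\infty$-pure-and-fullness of $\mathcal{J}$ is the surjectivity of $\bigoplus_k GH^k_{BC}(X)\to GH_{dR}(X)$, while Brylinski $\mathcal{C}^\infty$-pure-and-fullness at every stage is, by definition, the surjectivity of $\bigoplus_k SH^k_{BC}(X)\to \Tot H^\bullet_{dR}(X;\R)$; the dictionary carries one surjectivity statement into the other and the proof is complete. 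Your detour through the $\del\delbar$-Lemma is unnecessary, and it is precisely what creates the problem in your converse direction.

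The gap is the step ``$\mathcal{J}$ is $\mathcal{C}^\infty$-pure-and-full $\Rightarrow$ $\mathcal{J}$ satisfies the $\del\delbar$-Lemma,'' which you propose to obtain from surjectivity of the Bott-Chern-to-de Rham map together with $E_1$-degeneration of the two spectral sequences via a ``Fr\"olicher-type dimension count.'' No such formal argument exists: at the level of abstract bi-differential complexes, $E_1$-degeneration plus surjectivity does \emph{not} imply injectivity. Take the three-term zigzag $\langle a,b,c\rangle$ with $\delbar b=a$, $\del b=c$ and all other differentials zero: then $\dim H_{\del}=\dim H_{\delbar}=\dim H_{dR}=1$ (both spectral sequences degenerate at $E_1$), the Bott-Chern group $\langle a,c\rangle$ is two-dimensional and surjects onto $H_{dR}=\langle [a]\rangle$ with kernel $\langle a+c\rangle$, yet $\imm\delbar\cap\ker\del=\langle a\rangle\neq 0=\imm\del\delbar$, so the Lemma fails. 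The degeneration pins down $\dim H_{\del}+\dim H_{\delbar}=2\dim H_{dR}$, but surjectivity only gives the lower bound $\dim\Tot GH^\bullet_{BC}\geq \dim GH_{dR}$, which is the wrong direction for forcing injectivity. The true implication ``surjectivity $\Rightarrow$ $\de\de^\Lambda$-Lemma'' in the symplectic setting is exactly the Mathieu--Yan theorem combined with Merkulov--Guillemin--Cavalcanti, proved via the $\mathfrak{sl}_2$-module structure on forms; you may cite it (it is among the equivalences the paper lists), but you cannot re-derive it by counting dimensions. The clean repair is to drop the Lemma entirely and transport the surjectivity statement directly, as in your first paragraph.
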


\begin{proof}
 It suffices to observe that, in view of \cite[\S2]{cavalcanti-computations},
 $$ \varphi \;:=\; \exp(\im\omega)\,\exp\left(\frac{\Lambda}{2\im}\right) \colon \wedge^{n-\bullet}X \stackrel{\simeq}{\longrightarrow} U^{\bullet}_{\mathcal{J}} \;. $$
Furthermore,
 $$ \del_{\mathcal{J}} \;=\; -\frac{\im}{2}\, \varphi \circ \de^\Lambda \circ \varphi^{-1} \qquad \text{ and } \qquad \delbar_{\mathcal{J}} \;=\; \varphi \circ \de \circ \varphi^{-1} \;. $$
 
 Then we have the commutative diagram
 $$ \xymatrix{
  \Tot SH^{\bullet}_{BC}(X) \ar[r]^\simeq_{\varphi} \ar@{->>}[d] & \Tot GH^\bullet_{BC}(X) \ar@{->>}[d] \\
  \imm \left(\Tot SH^{\bullet}_{BC}(X) \to \Tot H^\bullet_{dR}(X;\R)\right) \ar[r]^{\varphi}_{\simeq} \ar@{^{(}->}[d] & \imm \left( \Tot GH^\bullet_{BC}(X) \to GH_{dR}(X) \right) \ar@{^{(}->}[d] \\
  \Tot H^\bullet_{dR}(X;\R) \ar@{=}[r] & GH_{dR}(X) \;.
 } $$
 
 This concludes the proof.
\end{proof}

\begin{rem}
 Concerning the notion of {\em $\mathcal{C}^\infty$-pure-and-fullness in the sense of \cite{angella-tomassini-4}}, that is, the property that
 $$ H^{\bullet}_{dR}(X;\R) \;=\; \bigoplus_{2r+s=\bullet} H^{(r,s)}_{\omega}(X) \;, $$
 we note that it is strictly weaker than the notion of $\mathcal{C}^\infty$-pure-and-fullness in the sense of Brylinski.
 In fact, by \cite[Theorem 2.6]{angella-tomassini-4}, every compact $4$-dimensional symplectic manifold is $\mathcal{C}^\infty$-pure-and-full in the sense of \cite{angella-tomassini-4}. On the other side, there are examples of such manifolds that do not satisfy the Hard Lefschetz Condition: hence, they are non-$\mathcal{C}^\infty$-pure-and-full in the sense of Brylinski. For example, consider non-tori nilmanifolds, \cite[Theorem A]{benson-gordon}, see also \cite[Theorem 9.2]{bock}.
 For a higher-dimensional example, see the results by M. Rinaldi in \cite{magda-thesis}.
\end{rem}

\subsection{Complex subgroups of cohomologies}

In the almost-complex case, T.-J. Li and W. Zhang introduced and studied the notion of $\mathcal{C}^\infty$-pure-and-fullness in \cite{li-zhang} (see \cite{draghici-li-zhang, angella-tomassini-1, angella-tomassini-2} and the references therein for further results).

More precisely, let $J$ be an almost-complex structure on the manifold $X$. For each $(p,q)\in\Z^2$, consider the subgroup
$$ H^{(p,q)}_{J}(X) \;:=\; \left\{ \left[\alpha\right] \in H^\bullet_{dR}(X;\C) \st \alpha\in\wedge^{p,q}X \right\} \;. $$
Given $k\in\Z$, the almost-complex structure $J$ is called {\em complex-$\mathcal{C}^\infty$-pure-and-full at the $k$th stage in the sense of Li and Zhang} \cite{li-zhang} if
$$ \bigoplus_{p+q=k} H^{(p,q)}_{J}(X) \;=\; H^k_{dR}(X;\C) \;. $$

Now we would like to compare the notions of $\mathcal{C}^\infty$-pure-and-fullness for a complex structure in the sense of Li and Zhang and for its induced generalized-complex structure.

\begin{lemma}
 Let $X$ be a compact manifold. Consider an almost-complex structure $J$ on $X$, viewed as a generalized-almost-complex structure $\mathcal{J}$. Then, for any $k\in\Z$, it holds
 $$ GH^{(k)}_{\mathcal{J}} \;=\; \bigoplus_{p-q=k} H^{(p,q)}_J(X) \;. $$
\end{lemma}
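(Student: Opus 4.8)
The plan is to establish the equality of sums $GH^{(k)}_{\mathcal{J}}(X) = \sum_{p-q=k} H^{(p,q)}_J(X)$ by proving the two inclusions separately, and then to argue that the sum on the right is automatically direct, so that it coincides with $\bigoplus_{p-q=k}H^{(p,q)}_J(X)$. The starting point is the description of the graduation induced by $\mathcal{J}$, namely
$$ U^k \;=\; \bigoplus_{p-q=k}\wedge^{p,q}_J X \;; $$
this is a pointwise linear-algebra identity, so it holds for an almost-complex $J$ with no integrability assumed. The observation I will rely on is that, for a \emph{fixed} $k$, the component of $U^k$ in total degree $d$ equals $\wedge^{p,q}_J X$ with $p=\frac{d+k}{2}$ and $q=\frac{d-k}{2}$: the two conditions $p-q=k$ and $p+q=d$ pin down the bidegree. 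Hence the splitting of a form in $U^k$ into bidegree components is nothing but its splitting into total-degree components.

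The inclusion $\supseteq$ is immediate: a $\de$-closed $(p,q)$-form $\alpha$ with $p-q=k$ lies in $U^k\cap\ker\de$, so its class lies in $GH^{(k)}_{\mathcal{J}}(X)$, whence $H^{(p,q)}_J(X)\subseteq GH^{(k)}_{\mathcal{J}}(X)$ for each such $(p,q)$. For $\subseteq$, I would take a class in $GH^{(k)}_{\mathcal{J}}(X)$ with representative $\alpha\in U^k\cap\ker\de$ and decompose $\alpha=\sum_{p-q=k}\alpha^{p,q}$. By the observation above these summands sit in pairwise distinct total degrees, so the forms $\de\alpha^{p,q}$, whose degrees differ by one from those of the $\alpha^{p,q}$, again lie in pairwise distinct degrees; thus $\de\alpha=0$ forces $\de\alpha^{p,q}=0$ for every summand. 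Each $\alpha^{p,q}$ is then a $\de$-closed $(p,q)$-form, so $[\alpha^{p,q}]\in H^{(p,q)}_J(X)$ and $[\alpha]=\sum_{p-q=k}[\alpha^{p,q}]$, which gives the inclusion.

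Finally, directness comes for free from the grading by total degree: the summands $H^{(p,q)}_J(X)$ with $p-q=k$ sit in the spaces $H^{p+q}_{dR}(X;\C)$ for pairwise distinct values of $p+q$, and $H^\bullet_{dR}(X;\C)=\bigoplus_d H^d_{dR}(X;\C)$, so classes arising from different bidegrees are independent. I do not anticipate a real obstacle; the only delicate point is the closedness of the bidegree components, and --- unlike the genuinely bigraded Dolbeault situation, where $\de$ has several components shifting the bidegree --- here it is forced by the mere distinctness of the total degrees and therefore needs nothing about the integrability of $J$.
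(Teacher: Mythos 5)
Your proof is correct and follows the same skeleton as the paper's: both start from the pointwise identity $U^k_{\mathcal{J}}=\bigoplus_{p-q=k}\wedge^{p,q}_JX$, decompose a $\de$-closed representative $\alpha\in U^k\cap\ker\de$ into its $(p,q)$-components, show that each component is itself $\de$-closed, and obtain directness from the total-degree grading of de Rham cohomology. The one step where you genuinely diverge is the closedness of the components. The paper writes $\de=A_{\mathcal{J}}+\del_{\mathcal{J}}+\delbar_{\mathcal{J}}+\bar A_{\mathcal{J}}$ as a sum of four operators shifting the $U^\bullet$-grading by $+2,+1,-1,-2$, identifies these with the four bidegree components $A_J,\del_J,\delbar_J,\bar A_J$ of $\de$ for the almost-complex structure, and concludes that all four annihilate $\alpha$, hence each $\alpha^{(p,q)}$. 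You instead observe that for fixed $k=p-q$ the components $\alpha^{(p,q)}$ occupy pairwise distinct total degrees $p+q$, so the forms $\de\alpha^{(p,q)}$ occupy pairwise distinct degrees $p+q+1$ and $\de\alpha=0$ forces each of them to vanish separately. Your route is more elementary: it uses only the degree grading of $\wedge^\bullet X\otimes\C$ and requires no information about how $\de$ interacts with the $U^\bullet$-grading, nor the matching of the generalized operators with the almost-complex ones. What the paper's longer detour buys is precisely that matching ($\del_{\mathcal{J}}=\del_J$, $\delbar_{\mathcal{J}}=\delbar_J$, and likewise for $A$ and $\bar A$), which is relevant elsewhere (e.g.\ for comparing the Bott--Chern-type groups in the integrable case), but it is not needed for the statement of the lemma itself. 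Both arguments are complete.
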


\begin{proof}
 Observe that in the almost-complex case the following equatily holds
 $$ U^k_{\mathcal{J}} \;=\; \bigoplus_{p-q=k} \wedge^{p,q}_JX \;. $$
 In general, the differential $\de$ of a generalized-complex structure can be decomposed as
 $$ \de \;=\; A_{\mathcal{J}} + \del_{\mathcal{J}} + \delbar_{\mathcal{J}} + \bar A_{\mathcal{J}} \colon U^k_{\mathcal{J}} \longrightarrow U^{k+2}_{\mathcal{J}} \oplus U^{k+1}_{\mathcal{J}} \oplus U^{k-1}_{\mathcal{J}} \oplus U^{k-2}_{\mathcal{J}} \,.$$
 However, as $\mathcal{J}$ is actually an almost-complex structure one has
 $$ \de \;=\; A_{J} + \del_{J} + \delbar_{J} + \bar A_{J} \colon \wedge^{p,q}_JX \longrightarrow \wedge^{p+2,q-1}_JX \oplus \wedge^{p+1,q}_JX \oplus \wedge^{p,q+1}_JX \oplus \wedge^{p-1,q+2}_JX $$
 and therefore,
 $$ A_{\mathcal{J}} \;=\; A_{J} \;, \qquad \del_{\mathcal{J}} \;=\; \del_{J} \;, \qquad \delbar_{\mathcal{J}} \;=\; \delbar_{J} \qquad \text{ and } \qquad \bar A_{\mathcal{J}} \;=\; \bar A_{J} \;. $$

 Take $[\alpha]\in GH^{(k)}_{\mathcal{J}}(X)$ with $\alpha = \sum_{p-q=k} \alpha^{(p,q)} \in U^k_{\mathcal{J}}$ and $\alpha^{(p,q)}\in\wedge^{p,q}_JX$. Then $A_{\mathcal{J}}\alpha=\del_{\mathcal{J}}\alpha=\delbar_{\mathcal{J}}\alpha=\bar A_{\mathcal{J}}\alpha=0$, but also $A_{J}\alpha=\del_{J}\alpha=\delbar_{J}\alpha=\bar A_{J}\alpha=0$. Thus, $\de \alpha^{(p,q)}=0$ for any $(p,q)$; that is, $\left[\alpha\right] = \sum_{p-q=k} \left[\alpha^{(p,q)}\right]$ where $\left[\alpha^{(p,q)}\right]\in H^{(p,q)}_{J}(X)$.
 The sum is obviously direct.
\end{proof}

As a consequence, we get the following.

\begin{prop}\label{prop:Cpf-cplx}
 Let $X$ be a compact manifold. Consider an almost-complex structure $J$ on $X$, viewed as a generalized-almost-complex structure $\mathcal{J}$.
 Then $\mathcal{J}$ is $\mathcal{C}^\infty$-pure-and-full if and only if $J$ is complex-$\mathcal{C}^\infty$-pure-and-full at every stage in the sense of Li and Zhang.
\end{prop}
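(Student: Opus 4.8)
The plan is to deduce this Proposition directly from the Lemma just established, which identifies $GH^{(k)}_{\mathcal{J}}(X) = \bigoplus_{p-q=k} H^{(p,q)}_J(X)$ and thereby serves as the bridge between the two gradings. The only genuine content is bookkeeping: the generalized-complex grading $U^\bullet$ sorts forms by $p-q$, whereas the Li--Zhang condition sorts de Rham classes by total degree $p+q$, and both are regroupings of the single double-indexed decomposition into the subgroups $H^{(p,q)}_J(X)$. Since for a bona fide almost-complex structure one has $H^{(p,q)}_J(X) \subseteq H^{p+q}_{dR}(X;\C)$ and $GH_{dR}(X) = \bigoplus_m H^m_{dR}(X;\C)$ as a graded group, the reindexing $(p,q)\leftrightarrow(k,m)$ with $k=p-q$ and $m=p+q$ is a bijection, so every direct-sum statement phrased in one grouping can be transported to the other.

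First I would combine the Lemma with the definition of $\mathcal{C}^\infty$-pure-and-fullness to rewrite the condition on $\mathcal{J}$ as $\bigoplus_{(p,q)\in\Z^2} H^{(p,q)}_J(X) = GH_{dR}(X)$: the directness of the outer sum over $k=p-q$ (coming from $\mathcal{C}^\infty$-pure-and-fullness) together with the directness within each fixed $k$ (coming from the Lemma) yields directness over the entire double index. For the forward direction I would then regroup this double-indexed direct sum by total degree instead of by $p-q$, obtaining $GH_{dR}(X) = \bigoplus_m \bigl(\bigoplus_{p+q=m} H^{(p,q)}_J(X)\bigr)$, and compare it with $GH_{dR}(X) = \bigoplus_m H^m_{dR}(X;\C)$ degree by degree; the inclusions $\bigoplus_{p+q=m} H^{(p,q)}_J(X) \subseteq H^m_{dR}(X;\C)$ are then forced to be equalities, which is exactly complex-$\mathcal{C}^\infty$-pure-and-fullness at every stage $m$ in the sense of Li and Zhang.

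The converse runs the same argument in reverse: assuming Li--Zhang at every stage, I would assemble $\bigoplus_m H^m_{dR}(X;\C) = \bigoplus_{(p,q)} H^{(p,q)}_J(X)$ and regroup by $k = p-q$, invoking the Lemma to recognize each inner block $\bigoplus_{p-q=k} H^{(p,q)}_J(X)$ as $GH^{(k)}_{\mathcal{J}}(X)$, so that the total sum equals $\bigoplus_k GH^{(k)}_{\mathcal{J}}(X)$ and $\mathcal{J}$ is $\mathcal{C}^\infty$-pure-and-full.

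The step I expect to require the most care is justifying that directness is preserved when passing between the two groupings. This hinges on the fact that the decomposition of $GH_{dR}(X)$ by total degree is canonical and that each $H^{(p,q)}_J(X)$ lies entirely in the single degree $p+q$; these two facts make the gradings by $p-q$ and by $p+q$ mutually compatible subgradings of the common $(p,q)$-decomposition, so a direct-sum identity in one grouping transfers automatically to the other. Once this compatibility is in place, everything else is purely formal.
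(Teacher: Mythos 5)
Your proposal is correct and matches the paper's route: the paper states this proposition as an immediate consequence of the preceding Lemma identifying $GH^{(k)}_{\mathcal{J}}(X)=\bigoplus_{p-q=k}H^{(p,q)}_J(X)$, giving no further argument, and your write-up supplies exactly the regrouping between the $p-q$ and $p+q$ gradings that this deduction requires. Your care about transferring directness --- resting on the fact that each $H^{(p,q)}_J(X)$ lies in the single canonical degree $p+q$ of $GH_{dR}(X)=\bigoplus_m H^m_{dR}(X;\C)$ --- is precisely the point that makes the "consequence" legitimate.
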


\section{Generalized-complex structures on the differential nilmanifold underlying Iwasawa}\label{sec:iwasawa}

The {\em Iwasawa manifold} is the complex nilmanifold defined by
$$ \mathbb{I}_3 \;:=\; \left. \mathbb{H}(3;\Z[\im]) \middle\backslash \mathbb{H}(3;\C) \right. $$
where $\mathbb{H}(3;\C)$ is the $3$-dimensional \emph{Heisenberg group} over $\mathbb{C}$, that is,
$$
\mathbb{H}(3;\C) \;:=\; \left\{
\left(
\begin{array}{ccc}
 1 & z^1 & z^3 \\
 0 &  1  & z^2 \\
 0 &  0  &  1
\end{array}
\right) \in \mathrm{GL}(3;\mathbb{C}) \st z^1,\,z^2,\,z^3 \in\C \right\}
\;,
$$
and $\mathbb{H}(3;\Z[\im]) := \mathbb{H}(3;\C) \cap \mathrm{GL}(3;\Z[\im])$. It is worth to remark that it constitutes one of the simplest examples of non-K\"ahler complex manifold (see, e.g., \cite{fernandez-gray, nakamura}).

\medskip

In our case, we are interested in its underlying real nilmanifold that we will denote by $M=\Gamma\backslash G$. Following \cite{salamon}, let $\mathfrak{g}=(0,0,0,0,13+42,14+23)$ be the real nilpotent Lie algebra naturally associated to $G$ (i.e., the differentiable Lie group underlying $\mathbb{H}(3;\C)$). This notation means that $\mathfrak{g}^*$ admits a basis $\{e^k\}_{k=1}^6$ satisfying
$$ \left\{ \begin{array}{l}
            \de e^1 \;=\; \de e^2 \;=\; \de e^3 \;=\; \de e^4 \;=\; 0 \\[5pt]
            \de e^5 \;=\; e^{13} - e^{24} \\[5pt]
            \de e^6 \;=\; e^{14} + e^{23}
           \end{array} \right. \;,$$
where $e^{ij}:=e^i\wedge e^j$. These are known as the \emph{structure equations}.

By the Nomizu theorem, \cite[Theorem 1]{nomizu}, the de Rham cohomology of $M$ can be computed by means of the associated Lie algebra $\mathfrak{g}$ (i.e., using the previous structure equations). More precisely, given the Riemannian metric $g:=\sum_{j=1}^{6} e^j \odot e^j$, the harmonic representatives of the de Rham cohomology are the following:
\begin{eqnarray*}
 H^0_{dR}(M;\R) &=& \R \left\langle \left[1\right] \right\rangle \;, \\[5pt]
 H^1_{dR}(M;\R) &=& \R \left\langle \left[e^1\right],\, \left[e^2\right],\, \left[e^3\right],\, \left[e^4\right] \right\rangle \;, \\[5pt]
 H^2_{dR}(M;\R) &=& \R \left\langle \left[e^{12}\right],\, \left[e^{34}\right],\, \left[e^{15}-e^{26}\right],\, \left[e^{25}+e^{16}\right],\, \right. \\[5pt]
 && \left. \left[e^{35}-e^{46}\right],\, \left[e^{45}+e^{36}\right],\, \left[e^{13}+e^{24}\right],\, \left[e^{23}-e^{14}\right] \right\rangle \;, \\[5pt]
 H^3_{dR}(M;\R) &=& \R \left\langle \left[e^{125}\right],\, \left[e^{126}\right],\, \left[e^{345}\right],\, \left[e^{346}\right],\, \right. \\[5pt]
 && \left. \left[e^{135}-e^{245}-e^{236}-e^{146}\right],\, \left[e^{235}+e^{145}+e^{136}-e^{246}\right],\, \right. \\[5pt]
 && \left. \left[-e^{135}+e^{236}-e^{146}-e^{245}\right],\, \left[-e^{136}-e^{235}+e^{145}-e^{246}\right],\, \right. \\[5pt]
 && \left. \left[e^{135}+e^{245}+e^{236}-e^{146}\right],\, \left[-e^{235}+e^{145}+e^{136}+e^{246}\right]\right\rangle \;, \\[5pt]
 H^4_{dR}(M;\R) &=& \R \left\langle \left[e^{1256}\right],\, \left[e^{3456}\right],\, \left[e^{2346}-e^{1345}\right],\, \left[e^{1346}+e^{2345}\right],\, \right. \\[5pt]
 && \left. \left[e^{1246}-e^{1235}\right],\, \left[e^{1236}+e^{1245}\right],\, \left[e^{2456}+e^{1356}\right],\, \left[e^{1456}-e^{2356}\right] \right\rangle \;, \\[5pt]
 H^5_{dR}(M;\R) &=& \R \left\langle \left[e^{23456}\right],\, \left[e^{13456}\right],\, \left[e^{12456}\right],\, \left[e^{12356}\right] \right\rangle \;, \\[5pt]
 H^6_{dR}(M;\R) &=& \R \left\langle \left[e^{123456}\right] \right\rangle \;.
\end{eqnarray*}

Any linear complex structure $J$ defined on $\mathfrak{g}$ gives rise to a complex structure on $M$ that will be called \emph{left-invariant}. The Iwasawa manifold can be regarded as one of these structures, although there is an infinite family of them (see \cite{andrada-barberis-dotti, couv} for a complete classification up to isomorphism).

Let $\mathfrak{g}^{1,0}$ be the $i$-eigenspace of $J$ as an endomorphism on $\mathfrak{g}^*_\mathbb{C}:=(\mathfrak{g}\otimes_\R\mathbb{C})^*$. It is well-known that $J$ is a complex structure on $\mathfrak{g}$ if and only if  $\de(\mathfrak{g}^{1,0})\subset\wedge^{2,0}(\mathfrak{g}^*_\mathbb{C})\oplus\wedge^{1,1}(\mathfrak{g}^*_\mathbb{C})$.
There are two special types of complex structures that deserve our attention.
\begin{itemize}
\item $J$ is said to be \emph{holomorphically-parallelizable} if $\de(\mathfrak{g}^{1,0})\subset\wedge^{2,0}(\mathfrak{g}^*_\mathbb{C})$. In this case, $\mathfrak{g}$ can be endowed with a complex Lie algebra stucture and $M$ has a global basis of holomorphic vector fields.
\item $J$ is called \emph{Abelian} if $\de(\mathfrak{g}^{1,0})\subset\wedge^{1,1}(\mathfrak{g}^*_\mathbb{C})$. In this case, it turns out that $\mathfrak{g}^{1,0}$ is actually an Abelian complex Lie algebra.
\end{itemize}

From the general study accomplished in \cite{latorre-ugarte}, one can conclude that there are only two left-invariant complex structures defined on $M$ which are $\mathcal{C}^\infty$-pure-and-full at every stage in the sense of Li and Zhang. They are precisely the holomorphically-parallelizable stucture $J_0$ corresponding to the Iwasawa manifold (as proven in \cite{angella-tomassini-1}) and the Abelian stucture $J_1$ in \cite[Theorem 3.3]{andrada-barberis-dotti}. In the following lines, we give the explicit decomposition of the de Rham cohomology groups for each of these structures.

\medskip

With respect to the basis $\{e^k\}_{k=1}^6$, the complex structure $J_0$ can be defined as (see \cite{nakamura})
$$ J_0 e^1 \;=\; -e^2 \;, \qquad J_0 e^3 \;=\; -e^4 \;, \qquad J_0 e^5 \;=\; -e^6 \;. $$
Therefore, the forms
$$ \left\{ \begin{array}{rcl}
\varphi_{0}^1 &:=& e^1 + \im e^2 \;\stackrel{\text{loc}}{=}\; \de z^1 \;, \\[5pt]
\varphi_{0}^2 &:=& e^3 + \im e^4 \;\stackrel{\text{loc}}{=}\; \de z^2 \;, \\[5pt]
\varphi_{0}^3 &:=& e^5 + \im e^6 \;\stackrel{\text{loc}}{=}\; \de z^3 - z^1\, \de z^2
\end{array} \right.
$$
provide a left-invariant co-frame for the space of $(1,0)$-forms on $M$ with respect to $J_0$, with complex structure equations
$$
\left\{
\begin{array}{rcl}
 \de\varphi_{0}^1 &=& 0 \\[5pt]
 \de\varphi_{0}^2 &=& 0 \\[5pt]
 \de\varphi_{0}^3 &=& \varphi_{0}^1\wedge\varphi_{0}^2
\end{array}
\right. \;.
$$

As already stated, $J_0$ is complex-$\mathcal{C}^\infty$-pure-and-full at every stage in the sense of Li and Zhang \cite[Theorem 3.1]{angella-1}. In fact, it is possible to see that
\begin{eqnarray*}
 H^1_{dR}(M;\C) &=& \underbrace{\C \left\langle \left[\varphi_{0}^1\right],\, \left[\varphi_{0}^2\right] \right\rangle}_{H^{(1,0)}_{J_0}(M)} \,\ \oplus \,\ \underbrace{\C\left\langle \left[\varphi_{0}^{\bar1}\right],\, \left[\varphi_{0}^{\bar2}\right] \right\rangle}_{H^{(0,1)}_{J_0}(M)} \;, \\[5pt]
 H^2_{dR}(M;\C) &=& \underbrace{\C \left\langle \left[\varphi_{0}^{12}\right],\, \left[\varphi_{0}^{23}\right] \right\rangle}_{H^{(2,0)}_{J_0}(M)} \,\ \oplus \,\ \underbrace{\C\left\langle \left[\varphi_{0}^{\bar1\bar3}\right],\, \left[\varphi_{0}^{\bar2\bar3}\right] \right\rangle}_{H^{(0,2)}_{J_0}(M)} \\[5pt]
 & \oplus & \underbrace{\C\left\langle \left[\varphi_{0}^{1\bar1}\right],\, \left[\varphi_{0}^{1\bar2}\right],\, \left[\varphi_{0}^{2\bar1}\right],\, \left[\varphi_{0}^{2\bar2}\right] \right\rangle}_{H^{(1,1)}_{J_0}(M)} \;, \\[5pt]
 H^3_{dR}(M;\C) &=& \underbrace{\C\left\langle \left[\varphi_{0}^{123}\right] \right\rangle}_{H^{(3,0)}_{J_0}(M)} \,\ \oplus \,\ \underbrace{\C\left\langle \left[\varphi_{0}^{\bar1\bar2\bar3}\right] \right\rangle}_{H^{(0,3)}_{J_0}(M)} \\[5pt]
 & \oplus & \underbrace{\C\left\langle \left[\varphi_{0}^{13\bar1}\right],\, \left[\varphi_{0}^{13\bar2}\right],\, \left[\varphi_{0}^{23\bar1}\right],\, \left[\varphi_{0}^{23\bar2}\right] \right\rangle}_{H^{(2,1)}_{J_0}(M)} \\[5pt]
 & \oplus & \underbrace{\C\left\langle \left[\varphi_{0}^{1\bar1\bar3}\right],\, \left[\varphi_{0}^{1\bar2\bar3}\right],\, \left[\varphi_{0}^{2\bar1\bar3}\right],\, \left[\varphi_{0}^{2\bar2\bar3}\right]  \right\rangle}_{H^{(1,2)}_{J_0}(M)} \;, \\[5pt]
 H^4_{dR}(M;\C) &=& \underbrace{\C\left\langle \left[\varphi_{0}^{123\bar1}\right],\, \left[\varphi_{0}^{123\bar2}\right] \right\rangle}_{H^{(3,1)}_{J_0}(M)} \,\ \oplus \,\ \underbrace{\C\left\langle \left[\varphi_{0}^{1\bar1\bar2\bar3}\right],\, \left[\varphi_{0}^{2\bar1\bar2\bar3}\right] \right\rangle}_{H^{(1,3)}_{J_0}(M)} \\[5pt]
 & \oplus & \underbrace{\C\left\langle \left[\varphi_{0}^{13\bar1\bar3}\right],\, \left[\varphi_{0}^{13\bar2\bar2}\right],\, \left[\varphi_{0}^{23\bar1\bar3}\right],\, \left[\varphi_{0}^{23\bar2\bar3}\right] \right\rangle}_{H^{(2,2)}_{J_0}(M)} \;, \\[5pt]
 H^5_{dR}(M;\C) &=& \underbrace{\C\left\langle \left[\varphi_{0}^{123\bar1\bar3}\right],\, \left[\varphi_{0}^{123\bar2\bar3}\right] \right\rangle}_{H^{(3,2)}_{J_0}(M)} \,\ \oplus \,\ \underbrace{\C\left\langle \left[\varphi_{0}^{13\bar1\bar2\bar3}\right],\, \left[\varphi_{0}^{23\bar1\bar2\bar3}\right] \right\rangle}_{H^{(2,3)}_{J_0}(M)} \;, \\[5pt]
\end{eqnarray*}
where we have listed the harmonic representatives with respect to the Hermitian metric $g_0 := \sum_{j=1}^{3} \varphi_{0}^j \odot \bar\varphi_{0}^j$ and we have shortened, e.g., $\varphi_{0}^{1\bar1}:=\varphi_{0}^1\wedge \bar\varphi_{0}^1$.

\medskip

Following \cite[Theorem 3.3]{andrada-barberis-dotti}, one can define the Abelian complex structure $J_1$ with respect to the basis $\{e^k\}_{k=1}^6$ by
$$ J_1 e^1 \;=\; -e^3 \;, \qquad J_1 e^2 \;=\; -e^4 \;, \qquad J_1 e^5 \;=\; -e^6 \;. $$
Then, the forms
$$\tilde{\varphi}_{1}^1 \;:=\; e^1 + \im e^3 \;, \quad \tilde{\varphi}_{1}^2 \;:=\; e^2 + \im e^4 \;, \quad \tilde{\varphi}_{1}^3 \;:=\; e^5 + \im e^6 $$
provide a left-invariant co-frame for the space of $(1,0)$-forms on $M$ with respect to $J_1$, with complex structure equations
$$
\left\{
\begin{array}{rcl}
 \de\tilde{\varphi}_{1}^1 &=& 0 \\[5pt]
 \de\tilde{\varphi}_{1}^2 &=& 0 \\[5pt]
 \de\tilde{\varphi}_{1}^3 &=& 
       \frac{\im}{2}\,\tilde{\varphi}_1^{1}\wedge\tilde{\varphi}_1^{\bar{1}}-\frac{1}{2}\,\tilde{\varphi}_1^{1}\wedge\tilde{\varphi}_1^{\bar{2}}
       -\frac{1}{2}\,\tilde{\varphi}_1^{2}\wedge\tilde{\varphi}_1^{\bar{1}}-\frac{\im}{2}\,\tilde{\varphi}_1^{2}\wedge\tilde{\varphi}_1^{\bar{2}}
\end{array}
\right. \;.
$$
Applying the change of basis
$$\varphi_1^1=\tilde{\varphi}_1^1-\im\tilde{\varphi}_1^2, \qquad \varphi_1^2=\tilde{\varphi}_1^1+\im\tilde{\varphi}_1^2, \qquad 
  \varphi_1^3=2\im\tilde{\varphi}_1^3,$$
we obtain
$$
\left\{
\begin{array}{rcl}
 \de\varphi_{1}^1 &=& 0 \\[5pt]
 \de\varphi_{1}^2 &=& 0 \\[5pt]
 \de\varphi_{1}^3 &=& - \varphi_{1}^2\wedge\bar\varphi_{1}^1
\end{array}
\right. \;.
$$

Observe that these last equations yield to the following equivalent definition of the complex structure $J_1$:
$$ J_1 e^1 \;=\; -e^2 \;, \qquad J_1 e^3 \;=\; e^4 \;, \qquad J_1 e^5 \;=\; e^6 \;. $$

As previously said, $J_1$ is complex-$\mathcal{C}^\infty$-pure-and-full at every stage in the sense of Li and Zhang (see \cite[Proposition 4]{latorre-otal-ugarte-villacampa} as regards to the first stage, see also \cite{latorre-ugarte}). In fact, one has
\begin{eqnarray*}
 H^1_{dR}(M;\C) &=& \underbrace{\C\left\langle \left[\varphi_{1}^1\right],\, \left[\varphi_{1}^2\right] \right\rangle}_{H^{(1,0)}_{J_1}(M)} 
\,\ \oplus \,\ \underbrace{\C\left\langle \left[\varphi_{1}^{\bar1}\right],\, \left[\varphi_{1}^{\bar2}\right] \right\rangle}_{H^{(0,1)}_{J_1}(M)} \;, \\[5pt]
 H^2_{dR}(M;\C) &=& \underbrace{\C\left\langle \left[\varphi_{1}^{12}\right],\, \left[\varphi_{1}^{23}\right] \right\rangle}_{H^{(2,0)}_{J_1}(M)} \,\ \oplus \,\ \underbrace{\C\left\langle \left[\varphi_{1}^{\bar1\bar2}\right],\, \left[\varphi_{1}^{\bar2\bar3}\right] \right\rangle}_{H^{(0,2)}_{J_1}(M)} \\[5pt]
 & \oplus & \underbrace{\C\left\langle \left[\varphi_{1}^{1\bar1}\right],\, \left[\varphi_{1}^{2\bar2}\right],\, \left[\varphi_{1}^{1\bar3}\right],\, \left[\varphi_{1}^{3\bar1}\right] \right\rangle}_{H^{(1,1)}_{J_1}(M)} \;, \\[5pt]
 H^3_{dR}(M;\C) &=& \underbrace{\C\left\langle \left[\varphi_{1}^{123}\right] \right\rangle}_{H^{(3,0)}_{J_1}(M)} \,\ \oplus \,\ \underbrace{\C\left\langle \left[\varphi_{1}^{\bar1\bar2\bar3}\right] \right\rangle}_{H^{(0,3)}_{J_1}(M)} \\[5pt]
 & \oplus & \underbrace{\C\left\langle \left[\varphi_{1}^{12\bar3}\right],\, \left[\varphi_{1}^{13\bar1}\right],\, \left[\varphi_{1}^{23\bar1}\right],\, \left[\varphi_{1}^{23\bar2}\right] \right\rangle}_{H^{(2,1)}_{J_1}(M)} \\[5pt]
 & \oplus & \underbrace{\C\left\langle \left[\varphi_{1}^{1\bar1\bar3}\right],\, \left[\varphi_{1}^{1\bar2\bar3}\right],\, \left[\varphi_{1}^{2\bar2\bar3}\right],\, \left[\varphi_{1}^{3\bar1\bar2}\right] \right\rangle}_{H^{(1,2)}_{J_1}(M)} \;, \\[5pt]
 H^4_{dR}(M;\C) &=& \underbrace{\C\left\langle \left[\varphi_{1}^{123\bar1}\right],\, \left[\varphi_{1}^{123\bar3}\right] \right\rangle}_{H^{(3,1)}_{J_1}(M)} \,\ \oplus \,\ \underbrace{\C\left\langle \left[\varphi_{1}^{1\bar1\bar2\bar3}\right],\, \left[\varphi_{1}^{3\bar1\bar2\bar3}\right] \right\rangle}_{H^{(1,3)}_{J_1}(M)} \\[5pt]
 & \oplus & \underbrace{\C\left\langle \left[\varphi_{1}^{13\bar1\bar3}\right],\, \left[\varphi_{1}^{23\bar2\bar3}\right],\, \left[\varphi_{1}^{12\bar2\bar3}\right],\, \left[\varphi_{1}^{23\bar1\bar2}\right] \right\rangle}_{H^{(2,2)}_{J_1}(M)} \;,  
\\[5pt]
 H^5_{dR}(M;\C) &=& \underbrace{\C\left\langle \left[\varphi_{1}^{123\bar1\bar3}\right],\, \left[\varphi_{1}^{123\bar2\bar3}\right] \right\rangle}_{H^{(3,2)}_{J_1}(M)} \,\ \oplus \,\ \underbrace{\C\left\langle \left[\varphi_{1}^{13\bar1\bar2\bar3}\right],\, \left[\varphi_{1}^{23\bar1\bar2\bar3}\right] \right\rangle}_{H^{(2,3)}_{J_1}(M)} \;. \\[5pt]
\end{eqnarray*}
Again, we have listed the harmonic representatives with respect to the Hermitian metric $g_1 := \sum_{j=1}^{3} \varphi_{1}^j \odot \bar\varphi_{1}^j$ and we have shortened, e.g., $\varphi_{1}^{1\bar1}:=\varphi_{1}^1\wedge \bar\varphi_{1}^1$.

\medskip

As it is observed in \cite[Theorem 4.6, Theorem 1.3]{ketsetzis-salamon}, the space of left-invariant oriented complex structures on $M$ has the homotopy type of the disjoint union of a point and a $2$-sphere. However, G.~R. Cavalcanti and M. Gualtieri show in \cite{cavalcanti-gualtieri} that it is possible to connect these two disjoint components by means of a left-invariant generalized-complex structure $\rho$ of type $1$ on $M$. In fact, one can see that this structure precisely connects, up to $B$-transforms and $\beta$-transforms, our complex structures $J_0$ and $J_1$. As these are the only two left-invariant complex structures on $M$, up to equivalence, that are $\mathcal{C}^\infty$-pure-and-full as generalized-complex structures, it is natural to wonder what happens to $\rho$.

Next, we prove that $\rho$ is actually $\mathcal{C}^\infty$-pure-and-full and we provide another manner of joining $J_0$ and $J_1$ by means of left-invariant almost-complex structures on $M$. In contrast, we show that this new path is not $\mathcal{C}^\infty$-pure-and-full in the sense of Li and Zhang.

\subsection{Connecting generalized-complex structure by Cavalcanti and Gualtieri}\label{subsec:iwasawa-gencplx-path}

Consider the generalized-complex structure $\mathcal{J}$ given by G.~R. Cavalcanti and M. Gualtieri in \cite[\S5]{cavalcanti-gualtieri}. Observe that it is defined by the canonical bundle with generator
$$ \rho \;:=\; \exp \left(\im\,\left(-e^{36}-e^{45}\right)\right) \wedge \left( e^1+\im\,e^2\right) \;. $$

\medskip
Let us see that this structure is $\mathcal{C}^\infty$-pure-and-full by computing the subgroups $GH^{(k)}_{\mathcal{J}}(M) \;=\; \imm \left( GH^k_{BC}(M) \to GH_{dR}(M) \right)$. Observe that these calculations can be done at the Lie algebra level as a consequence of \cite{nomizu} and \cite{angella-calamai}.

\medskip
Consider the fibration
$$ \xymatrix{
 \left( \mathbb{T}^4,\, \omega \right) \ar@{^{(}->}[rr] && M \ar@{->>}[dd] \\
 (e^3,e^4,e^5,e^6) \ar@{|->}[r] & (e^1,\ldots,e^6) \ar@{|->}[d] & \\
 & (e^1,e^2) & \left( \mathbb{T}^2,\, J \right)
} $$
where
$$ \omega \;:=\; -e^{36}-e^{45} \qquad \text{ and } \qquad J \colon e_1 \longmapsto e_2 \;. $$
We note that, for every $k\in\Z$, one has
$$ U^k \;=\; \bigoplus_{r+s=k} U^r_J \otimes U^s_\omega \;. $$

Therefore, we compute
\begin{eqnarray*}
 U^1_J &=& \left\langle e^1+\im\,e^2 \right\rangle \;, \\[5pt]
 U^0_J &=& \left\langle 1,\; e^{12} \right\rangle \;, \\[5pt]
 U^{-1}_J &=& \left\langle e^1-\im\,e^2 \right\rangle \;, 
\end{eqnarray*}
and
\begin{eqnarray*}
 U^2_\omega    &=& \left\langle 1-\im\,e^{36}-\im\,e^{45}-e^{3456} \right\rangle \;, \\[5pt]
 U^1_\omega    &=& \left\langle e^{3}-\im\,e^{345},\; e^{4}+\im\,e^{346},\; e^{5}+\im\,e^{356},\; e^{6}-\im\,e^{456} \right\rangle \;, \\[5pt]
 U^0_\omega    &=& \left\langle e^{34},\; e^{35},\; e^{46},\; e^{56},\; e^{36}-e^{45},\; 1+e^{3456} \right\rangle \;, \\[5pt]
 U^{-1}_\omega &=& \left\langle e^{3}+\im\,e^{345},\; e^{4}-\im\,e^{346},\; e^{5}-\im\,e^{356},\; e^{6}+\im\,e^{456} \right\rangle \;, \\[5pt]
 U^{-2}_\omega &=& \left\langle 1+\im\,e^{36}+\im\,e^{45}-e^{3456} \right\rangle \;.
\end{eqnarray*}
From these computations, we get the claim. In fact, it can be seen that:
\begin{eqnarray*}
 GH^{(3)}_{\mathcal{J}}(M) &=& \C\left\langle \left[e^1+\im\,e^2-\im\,e^{136}-\im\,e^{145}+e^{236}+e^{245}-e^{13456}-\im\,e^{23456}\right] \right\rangle \;, \\[10pt]
 GH^{(2)}_{\mathcal{J}}(M) &=& \C\left\langle \left[e^{13}+\im\,e^{23}-\im\,e^{1345}+e^{2345}\right],\; \left[1-\im\,e^{36}-\im\,e^{45}-e^{3456}\right],\; \right. \\[5pt]
 && \left. \left[e^{15}+\im\,e^{16}+\im\,e^{25}-e^{26}+\im\,e^{1356}+e^{1456}-e^{2356}+\im\,e^{2456}\right],\; \right. \\[5pt]
 && \left. \left[e^{12}-\im\,e^{1236}-\im\,e^{1245}-e^{123456}\right] \right\rangle \;, \\[10pt]
 GH^{(1)}_{\mathcal{J}}(M) &=& \C\left\langle \left[e^1+\im\,e^2+e^{13456}+\im\,e^{23456}\right],\; \left[e^3-\im\,e^{345}\right],\; \left[e^{4}+\im\,e^{346}\right],\; \right. \\[5pt]
 && \left. \left[e^{125}+\im\,e^{12356}\right],\; \left[e^{126}-\im\,e^{12456}\right],\; \left[e^{135}-e^{146}+\im\,e^{235}-\im\,e^{246}\right],\; \right. \\[5pt]
 && \left. \left[e^{1}-\im\,e^{2}-\im\,e^{136}-\im\,e^{145}-e^{236}-e^{245}-e^{13456}+\im\,e^{23456}\right],\; \right. \\[5pt]
 && \left. \left[e^{136}-e^{145}-2\,\im\,e^{146}+\im\,e^{236}-\im\,e^{245}+2\,e^{246}\right] \right\rangle \\[10pt]
 GH^{(0)}_{\mathcal{J}}(M) &=& \C\left\langle \left[e^{13}+\im\,e^{23}+\im\,e^{1345}-e^{2345}\right],\; \left[e^{34}\right],\; \left[1+e^{3456}\right],\; \left[e^{1235}\right],\; \right. \\[5pt]
 && \left. \left[e^{1256}\right],\; \left[e^{12}+e^{123456}\right],\; \left[e^{13}-\im\,e^{23}-\im\,e^{1345}-e^{2345}\right],\; \left[e^{35}-e^{46}\right],\; \right. \\[5pt]
 && \left. \left[e^{15}+\im\,e^{16}+\im\,e^{25}-e^{26}-\im\,e^{1356}-e^{1456}+e^{2356}-\im\,e^{2456}\right],\; \right. \\[5pt]
 && \left. \left[e^{15}-\im\,e^{16}-\im\,e^{25}-e^{26}+\im\,e^{1356}-e^{1456}+e^{2356}+\im\,e^{2456}\right] \right\rangle \;, \\[10pt]
 GH^{(-1)}_{\mathcal{J}}(M) &=& \C\left\langle \left[e^{1}-\im\,e^{2}+e^{13456}-\im\,e^{23456}\right],\; \left[e^{3}+\im\,e^{345}\right],\; \left[e^{4}-\im\,e^{346}\right],\; \right. \\[5pt]
 && \left. \left[e^{125}-\im\,e^{12356}\right],\; \left[e^{126}+\im\,e^{12456}\right],\; \left[e^{135}-e^{146}-\im\,e^{235}+\im\,e^{246}\right],\; \right. \\[5pt]
 && \left. \left[e^{1}+\im\,e^{2}+\im\,e^{136}+\im\,e^{145}-e^{236}-e^{245}-e^{13456}-\im\,e^{23456}\right],\; \right. \\[5pt]
 && \left. \left[e^{136}-e^{145}+2\,\im\,e^{146}-\im\,e^{236}+\im\,e^{245}+2\,e^{246}\right] \right\rangle \;, \\[10pt]
 GH^{(-2)}_{\mathcal{J}}(M) &=& \C\left\langle \left[e^{13}-\im\,e^{23}+\im\,e^{1345}+e^{2345}\right],\; \left[1+\im\,e^{36}+\im\,e^{45}-e^{3456}\right],\; \right. \\[5pt]
 && \left. \left[e^{15}-\im\,e^{16}-\im\,e^{25}-e^{26}-\im\,e^{1356}+e^{1456}-e^{2356}-\im\,e^{2456}\right] ,\; \right. \\[5pt]
 && \left. \left[e^{12}+\im\,e^{1236}+\im\,e^{1245}-e^{123456}\right] \right\rangle \;, \\[5pt]
 GH^{(-3)}_{\mathcal{J}}(M) &=& \C\left\langle \left[e^{1}-\im\,e^{2}+\im\,e^{136}+\im\,e^{145}+e^{236}+e^{245}-e^{13456}+\im\,e^{23456}\right] \right\rangle \;.
\end{eqnarray*}

\subsection{Connecting almost-complex structures}

Let us start noting that the generalized-complex structure $\rho$ cannot be viewed as an almost-complex structure. For this aim, consider again the generalized-complex structure $\rho := \exp \left(\im\,\left(-e^{36}-e^{45}\right)\right) \wedge \left( e^1+\im\,e^2\right)$ by Cavalcanti and Gualtieri \cite[\S5]{cavalcanti-gualtieri}. For each $t\in[0,1]$, take the $B$-field
$$ B_t \;:=\; \exp(\pi\,\im\,t)\,\left( e^{35}-e^{46} \right) $$
and the $\beta$-field
$$ \beta_t \;:=\; -\frac{1}{4}\, \exp(\pi\,\im\,t)\, \left( e_3-\im\,\exp(\pi\,\im\,t)\,e_4 \right)\,\left( e_5-\im\,\exp(\pi\,\im\,t)\,e_6 \right) \;. $$
One has
$$ \exp(-\beta_t)\,\exp(-B_t)\,\rho \;=\; \left( e^1+\im\,e^2 \right)\wedge\left( e^3+\im\,\exp(\pi\,\im\,t)\,e^4 \right)\wedge\left( e^5+\im\,\exp(\pi\,\im\,t)\,e^6 \right) \;. $$
The endomorphism
$$ K_t \colon \left\{\begin{array}{rcl}
                    e^1 &\mapsto& - e^2 \\[5pt]
                    e^3 &\mapsto& - \exp(\pi\,\im\,t)\,e^4 \\[5pt]
                    e^5 &\mapsto& - \exp(\pi\,\im\,t)\,e^6
                   \end{array}\right. $$
is not an almost-complex structure for each $t\in[0,1]$.

\medskip

We now construct a curve of almost-complex structures on $M$ connecting the holomorphically-parallelizable structure $J_0$ and the Abelian complex structure $J_1$. Notice that, up to $\beta$-transforms and $B$-transform, it gives rise to a curve of generalized-almost-complex structures. We study $\mathcal{C}^\infty$-pure-and-fullness for the almost-complex structures.

For $t\in [0,1]$, consider the almost-complex structure

\setlength{\arraycolsep}{2pt}
\begin{equation}\label{eq:Jt}
J_t \;:=\;
 \left( \begin{array}{cc|cc|cc}
  & \ 1 \ & & & & \\
  \ -1 \ & & & & & \\
  \hline
  & & & \cos(\pi\,t) & & \sin(\pi\,t) \\
  & & -\cos(\pi\,t) & & \sin(\pi\,t) & \\
  \hline
  & & & -\sin(\pi\,t) & & \cos(\pi\,t) \\
  & & -\sin(\pi\,t) & & -\cos(\pi\,t) &
 \end{array} \right) \in \End(TM) \;.
\end{equation}

\medskip
Observe that the notation is coherent with the previous for $t=0$ and $t=1$. That is, $J_0$ coincides with the above holomorphically-parallelizable structure, and $J_1$, with the above Abelian complex structure.

 Consider
 \begin{eqnarray*}
 \rho_t &:=& \left( e^1 + \im\,e^2 \right) \wedge \left( e^3 + \im\,\left( \cos(\pi\,t)\,e^4 + \sin(\pi\,t)\,e^6 \right) \right) \\[5pt]
 && \wedge \left( e^5 - \im\,\left( \sin(\pi\,t)\,e^4 - \cos(\pi\,t)\,e^6 \right) \right) \;.
 \end{eqnarray*}
 In an equivalent manner, we can write it as:
 $$ \rho_t \;=\; \left( e^1 + \im\,e^2 \right) \wedge \left( -1 + \exp(\Xi) \wedge \exp(\im\,\omega_t) \right) $$
 where
 $$ \Xi \;:=\; e^{35} - e^{46} \quad \text{ and } \quad \omega_t \;:=\; \cos(\pi\,t)\,\left( e^{45} + e^{36} \right) - \sin(\pi\,t)\, \left( e^{34}+e^{56} \right) \;. $$
 Take $\beta:=e_{35}\in\wedge^2TM$. Then
 $$ \exp(\beta)\,\rho_t \;=\; \left(e^1+\im\,e^2\right) \wedge \exp(\Xi) \wedge \exp(\im\,\omega_t) \;. $$
 
 \smallskip\noindent
 Take also $B:=-\Xi \in \wedge^2M$. We obtain
 $$ \exp(B)\exp(\beta)\,\rho_t \;=\; \left(e^1+\im\,e^2\right) \wedge \exp(\im\,\omega_t) \;. $$
 
 \smallskip\noindent
 By computing $\left( e^1 + \im\,e^2 \right) \wedge \left( e^1 - \im\,e^2 \right) \wedge \omega_t^{2} = -4\im\,e^{123456} \neq 0$, we have that $\exp(B)\exp(\beta)\,\rho_t$ yields to a generalized-almost-complex structure of type $1$ on $M$, due to \cite[Theorem 4.8]{gualtieri-phd}.

 \medskip

We study complex-$\mathcal{C}^\infty$-pure-and-fullness in the sense of Li and Zhang for $J_t$.

\smallskip
Define the following basis of $(1,0)$-forms with respect to $J_t$:
$$ \left\{ \begin{array}{l}
            \varphi_t^1 \;=\; e^1 + \im\, e^2 \\[5pt]
            \varphi_t^2 \;=\; e^3 + \im\, \left( \cos(\pi\,t)\, e^4 + \sin(\pi\,t)\, e^6 \right) \\[5pt]
            \varphi_t^3 \;=\; e^5 - \im\, \left( \sin(\pi\,t)\, e^4 - \cos(\pi\,t)\, e^6 \right)
           \end{array} \right. \;.$$

The structure equations can be expressed as
$$ \left\{ \begin{array}{lll}
            \de \varphi_t^1 &=& 0 \\[5pt]
            \de \varphi_t^2 &=& \frac{1}{4}\, \sin(\pi\,t)\, \left( \left( 1+\cos(\pi\,t) \right)\, \varphi_t^{12} - \sin(\pi\,t)\, \varphi_t^{13} \right.
             \\[5pt] 
                             && \ \left. + \left( 1-\cos(\pi\,t) \right)\, \varphi_t^{1\bar{2}} + \sin(\pi\,t)\, \varphi_t^{1\bar{3}} 
                                       + \left( 1-\cos(\pi\,t) \right)\, \varphi_t^{2\bar{1}} \right. \\[5pt]
                             && \ \left. \sin(\pi\,t)\, \varphi_t^{3\bar{1}} - \left( 1+\cos(\pi\,t) \right)\, \varphi_t^{\bar{1}\bar{2}} 
                                       + \sin(\pi\,t)\, \varphi_t^{\bar{1}\bar{3}} \right) \\[5pt]
            \de \varphi_t^3 &=& \frac{1}{4}\, \left( \left( 1+\cos(\pi\,t) \right)^2\, \varphi_t^{12} - \sin(\pi\,t) \left( 1+\cos(\pi\,t) \right)\, \varphi_t^{13} \right. \\[5pt]
                             &&  \ \left. \left( 1-\cos^2(\pi\,t) \right)\,\varphi_t^{1\bar{2}} + 
                                              \sin(\pi\,t)\, \left( 1+\cos(\pi\,t) \right)\, \varphi_t^{1\bar{3}} \right. \\[5pt]
                             && \ \left. - \left( 1-\cos(\pi\,t) \right)^2\, \varphi_t^{2\bar{1}} - 
                                              \sin(\pi\,t)\, \left( 1-\cos(\pi\,t) \right)\, \varphi_t^{3\bar{1}} \right. \\[5pt]
                             && \ \left. \left( 1-\cos^2(\pi\,t) \right)\, \varphi_t^{\bar{1}\bar{2}} - 
                                              \sin(\pi\,t)\, \left( 1-\cos(\pi\,t) \right)\, \varphi_t^{\bar{1}\bar{3}} \right) 
           \end{array} \right. \;. $$

By direct computation, it is possible to see that, for $t\in \{0,\, 1\}$, one has
$$ H_{J_t}^{(1,0)}(M) \;=\; \C\left\langle \left[ \varphi_t^1 \right],\; \left[ \varphi_t^2 \right] \right\rangle \qquad\text{and}\qquad
  H_{J_t}^{(0,1)}(M) \;=\; \C\left\langle \left[ \bar\varphi_t^{1} \right],\; \left[ \bar\varphi_t^{2} \right] \right\rangle \;,$$
whereas, for $t\in(0,1)$,
$$ H_{J_t}^{(1,0)}(M) \;=\; \C\left\langle \left[ \varphi_t^1 \right] \right\rangle \qquad\text{and}\qquad
  H_{J_t}^{(0,1)}(M) \;=\; \C\left\langle \left[ \bar\varphi_t^{1} \right] \right\rangle \;. $$

Therefore, the almost-complex structures in the interior of the path \eqref{eq:Jt} joining the holomorphically-parallelizable and the Abelian complex structures on $M$ are not complex-$\mathcal{C}^\infty$-full at the first stage in the sense of Li and Zhang. Consequently, by Proposition \ref{prop:Cpf-cplx}, they are not $\mathcal{C}^\infty$-pure-and-full as generalized-almost-complex-structures.

\end{document}